\documentclass[11pt]{article}

\usepackage{amssymb}

\usepackage[english]{babel}
\usepackage{amsmath}
\usepackage{amssymb}
\usepackage{graphicx}
\usepackage{color}
\usepackage{xcolor}
\usepackage{booktabs}
\usepackage{array}   
\usepackage{afterpage}
\usepackage{subfig}
\usepackage[font={small,it}]{caption}

\usepackage{graphicx}
\usepackage{color}
\usepackage{amsmath,amsthm,amsfonts,epsfig,setspace}
\numberwithin{equation}{section}

\newcommand{\ds}{\displaystyle}
\def\nm{\noalign{\medskip}}
\newtheorem{thm}{Theorem}[section]
\newtheorem{rmk}{Remark}[section]

\newtheorem{lem}{Lemma}[section]

\setlength\topmargin{-1cm} \setlength\textheight{220mm}
\setlength\oddsidemargin{0mm}
\setlength\evensidemargin\oddsidemargin \setlength\textwidth{160mm}
\setlength\baselineskip{18pt}

 \def\p{\partial}
\def \Vh0{\stackrel{\circ}{V}_h}

\def\l{\label}  \def\f{\frac}  

\def\l|{\left|}
\def\r|{\right|}

\newcommand{\R}{\mathbb{R}}
\newcommand{\C}{\mathbb{C}}

\newcolumntype{L}{>{$}l<{$}}
\newcolumntype{C}{>{$}c<{$}}
\newcolumntype{R}{>{$}r<{$}}

\newcommand{\lc}
{\mathrel{\raise2pt\hbox{${\mathop<\limits_{\raise1pt\hbox
{\mbox{$\sim$}}}}$}}}

\newcommand{\gc}
{\mathrel{\raise2pt\hbox{${\mathop>\limits_{\raise1pt\hbox{\mbox{$\sim$}}}}$}}}

\newcommand{\ec}
{\mathrel{\raise2pt\hbox{${\mathop=\limits_{\raise1pt\hbox{\mbox{$\sim$}}}}$}}}

\newcommand{\vertfig}[2][]{%
  \begin{minipage}{6in}\subfloat[#1]{#2}\end{minipage}}

\def\be{\begin{equation}} \def\ee{\end{equation}}

\def\bea{\begin{eqnarray}}  \def\eea{\end{eqnarray}}

\def\beas{\begin{eqnarray*}} \def\eeas{\end{eqnarray*}}

\def\bn{\begin{enumerate}} \def\en{\end{enumerate}}

\def\bd{\begin{description}} \def\ed{\end{description}}

\title{Minnaert resonances for acoustic waves in bubbly media\thanks{\footnotesize This work was supported  by the
ERC Advanced Grant Project MULTIMOD--267184. Hyundae Lee was supported by NRF-2015R1D1A1A01059357 grant.  Hai Zhang was supported by a startup fund from HKUST.}}

\author{
Habib Ammari\thanks{\footnotesize Department of Mathematics, 
ETH Z\"urich, 
R\"amistrasse 101, CH-8092 Z\"urich, Switzerland (habib.ammari@math.ethz.ch, brian.fitzpatrick@sam.math.ethz.ch, david.gontier@sam.math.ethz.ch ).} \and Brian Fitzpatrick\footnotemark[2] \and David Gontier\footnotemark[2] 
\and Hyundae Lee\thanks{Department of Mathematics, Inha University,  253 Yonghyun-dong Nam-gu,  Incheon 402-751,  Korea (hdlee@inha.ac.kr).}  
 \and Hai Zhang\thanks{\footnotesize 
Department of Mathematics, 
 HKUST,  Clear Water Bay, Kowloon, Hong Kong (haizhang@ust.hk).}}

\date{}
\begin{document}
\maketitle

\begin{abstract}
Through the application of layer potential techniques and Gohberg-Sigal theory we derive an original formula for the Minnaert resonance frequencies of arbitrarily shaped bubbles. We also provide a mathematical justification for the monopole approximation of scattering of acoustic waves by bubbles at their Minnaert resonant frequency. Our results are complemented by several numerical examples which serve to validate our formula in two dimensions.
\end{abstract}

\medskip

\bigskip

\noindent {\footnotesize Mathematics Subject Classification
(MSC2000): 35R30, 35C20.}

\noindent {\footnotesize Keywords: Minnaert resonance, bubble, monopole approximation, layer potentials, acoustic waves.}


\section{Introduction} \label{sec-intro}

The purpose of this work is to understand acoustic wave propagation through a liquid containing bubbles. Our motivation is the use of bubbles in medical ultrasonic imaging as strong sound scatterers at particular frequencies known as Minnaert resonances.   Many interesting physical works have been devoted to the acoustic bubble problem; see, for instance,  \cite{calvo, leroy1, breathing, hwang, leroy2, unusual}.  Nevertheless, the characterization of the Minnaert resonances for arbitrary shaped bubbles has been a longstanding problem.  

In this paper we derive an original formula for the Minnaert resonances of bubbles of arbitrary shapes using layer potential techniques and Gohberg-Sigal theory \cite{akl}. Our formula can be generalized to multiple bubbles. We provide a mathematical justification for the monopole approximation and demonstrate the enhancement of the scattering in the far field at the Minnaert resonances. We show that there is a correspondence between bubbles in water and plasmonic nanoparticles in that both raise similar fundamental questions \cite{matias}. However, the mathematical formulation of Minnaert resonances is much more involved than the formulation of plasmonic resonances.   

The Minnaert resonance is a low frequency resonance in which the wavelength is much larger than the size of the bubble \cite{leroy1}. Our results in this paper have important applications. They can be used to show that at the Minnaert resonance it is possible to achieve superfocusing of acoustic waves or imaging of passive sources with a resolution beyond the Rayleigh diffraction limit \cite{hai, hai2}. Foldy's approximation applies and yields to the conclusion that the medium surrounding the source behaves like a high contrast dispersive medium \cite{Foldy}. As the dispersion is small, it has little effect on the superfocusing and superresolution phenomena. Effective equations for wave propagation in bubbly liquids have been derived in the low frequency regime where the frequency is much smaller than the Minneart resonance frequency \cite{caflish, caflish2,kargl}.  In this paper, however, we are more concerned with wave propagation in the resonant regime. 

The paper is organized as follows. In Section \ref{sec-minnaert}
we consider the scattering of acoustic waves in three dimensions by a single bubble and derive its Minnaert resonances in terms of its capacity, volume, and material parameters. In Section \ref{sec-3d-point-scatter} we derive the point scatterer approximation of the bubble in the far-field. In Section \ref{sec:numerics} we perform numerical simulations in two dimensions to illustrate the main findings of this paper.  The paper ends with some concluding remarks. In Appendix \ref{sec-appendix-3d}, we collect some useful asymptotic formulas for layer potentials in two and three dimensions.  Derivations of the two-dimensional Minnaert resonances are given in Appendix \ref{appendixB}. 

\section{The Minnaert resonance} \label{sec-minnaert}
We consider the scattering of acoustic waves in a homogeneous media by a bubble embedded inside. Assume that the bubble occupies a bounded and simply connected domain $D$ with $\p D \in C^{1, s}$ for some $0<s<1$. 
We denote by $\rho_b$ and $\kappa_b$ the density and the bulk modulus of the air inside the bubble, respectively. $\rho$ and $\kappa$ are the corresponding parameters for the background media $\R^3 \backslash D$. 
The scattering problem can be modeled by the following equations:
\be \label{eq-scattering}
\left\{
\begin{array} {ll}
&\ds \nabla \cdot \f{1}{\rho} \nabla  u+ \frac{\omega^2}{\kappa} u  = 0 \quad \mbox{in } \R^3 \backslash D, \\
\nm
&\ds \nabla \cdot \f{1}{\rho_b} \nabla  u+ \frac{\omega^2}{\kappa_b} u  = 0 \quad \mbox{in } D, \\
\nm
&\ds  u_{+} -u_{-}  =0   \quad \mbox{on } \partial D, \\
\nm
& \ds  \f{1}{\rho} \f{\p u}{\p \nu} \bigg|_{+} - \f{1}{\rho_b} \f{\p u}{\p \nu} \bigg|_{-} =0 \quad \mbox{on } \partial D,\\
\nm
&  u^s:= u- u^{i}  \,\,\,  \mbox{satisfies the Sommerfeld radiation condition.}
  \end{array}
 \right.
\ee
Here, $\partial/\partial \nu$ denotes the outward normal derivative and $|_\pm$ denote the limits from outside and inside $D$.  

We introduce four auxiliary parameters to facilitate our analysis:
\be \label{data1}
v = \sqrt{\frac{\rho}{\kappa}}, \,\, v_b = \sqrt{\frac{\rho_b}{\kappa_b}}, \,\, k=\omega v, \,\, k_b= \omega v_b.
\ee

We also introduce two dimensionless contrast parameters:
\be \label{data2}
\delta = \f{\rho_b}{\rho}, \,\, \tau= \f{k_b}{k}= \f{v_b}{v} =\sqrt{\f{\rho_b \kappa}{\rho \kappa_b}}. 
\ee

By choosing proper physical units, we may assume that the size of the bubble is of order 1 and that the wave speeds outside and inside the bubble are both of order 1. Thus the contrast between the wave speeds is not significant. We assume, however, that there is a large contrast in the bulk modulii. In summary, we assume that $\delta \ll 1 $ and $\tau= O(1)$. 

We use layer potentials to represent the solution to the scattering problem (\ref{eq-scattering}). Let the single layer potential $\mathcal{S}_{D}^{k}$ associated with $D$ and wavenumber $k$ be defined by
$$
\mathcal{S}_{D}^{k} [\psi](x) =  \int_{\p D} G(x, y, k) \psi(y) d\sigma(y),  \quad x \in  \p {D},
$$
where $$G(x, y, k)= - \f{e^{ik|x-y|}}{4 \pi|x-y|}$$ is the  Green function of the Helmholtz equation in $\R^3$, subject to the Sommerfeld radiation condition.
We also define the boundary integral operator $\mathcal{K}_{D}^{k, *}$ by
$$
\mathcal{K}_{D}^{k, *} [\psi](x)  = \int_{\p D } \f{\p G(x, y, k)}{ \p \nu(x)} \psi(y) d\sigma(y) ,  \quad x \in \p D. 
$$

Then the solution $u$ can be written as 
\be \label{Helm-solution}
u(x) = \left\{
\begin{array}{lr}
u^{in} + \mathcal{S}_{D}^{k} [\psi], & \quad x \in \R^3 \backslash \bar{D},\\
\mathcal{S}_{D}^{k_b} [\psi_b] ,  & \quad x \in {D},
\end{array}\right.
\ee
for some surface potentials $\psi, \psi_b \in  L^2(\p D)$. 
Using the jump relations for the single layer potentials, it is easy to derive that $\psi$ and $\psi_b$ satisfy the following system of boundary integral equations:
\be  \label{eq-boundary}
\mathcal{A}(\omega, \delta)[\Psi] =F,  
\ee
where
\[
\mathcal{A}(\omega, \delta) = 
 \begin{pmatrix}
  \mathcal{S}_D^{k_b} &  -\mathcal{S}_D^{k}  \\
  -\f{1}{2}Id+ \mathcal{K}_D^{k_b, *}& -\delta( \f{1}{2}Id+ \mathcal{K}_D^{k, *})
\end{pmatrix}, 
\,\, \Psi= 
\begin{pmatrix}
\psi_b\\
\psi
\end{pmatrix}, 
\,\,F= 
\begin{pmatrix}
u^{in}\\
\delta \f{\partial u^{in}}{\partial \nu}
\end{pmatrix}.
\]

One can show that the scattering problem (\ref{eq-scattering}) is equivalent to the boundary integral equations (\ref{eq-boundary}). 

Throughout the paper, we denote by $\mathcal{H} = L^2(\p D) \times L^2(\p D)$ and by $\mathcal{H}_1 = H^1(\p D) \times L^2(\p D)$, 
and use $(\cdot, \cdot)$ for the inner product in $L^2$ spaces and $|| \; ||$ for the norm in $\mathcal{H}$.  Here, $H^1$ is the standard Sobolev space. 
It is clear that $\mathcal{A}(\omega, \delta)$ is a bounded linear operator from $\mathcal{H}$ to $\mathcal{H}_1$, i.e.
$\mathcal{A}(\omega, \delta) \in \mathcal{L}(\mathcal{H}, \mathcal{H}_1)$. 

The resonance of the bubble in the scattering problem (\ref{eq-scattering}) can be defined as all the complex numbers $\omega$ with negative imaginary part such that there exists a nontrivial solution to the following equation:
\be  \label{eq-resonance}
\mathcal{A}(\omega, \delta)[\Psi] =0.
\ee
These can be viewed as the characteristic values of the operator-valued analytic function (with respect to $\omega$)
$\mathcal{A}(\omega, \delta)$.
We are interested in the quasi-static resonance of the bubble, or the resonance frequency at which the size of the bubble is much smaller than the wavelength of the incident wave outside the bubble. In some physics literature, this resonance is called the Minnaert resonance. Due to our assumptions on the bubble being of size order one, and the wave speed outside of the bubble also being of order one, this resonance should lie in a small neighborhood of the origin in the complex plane. 
In what follows, we apply the Gohberg-Sigal theory to find this resonance. 

We first look at the limiting case when $\delta =\omega=0$.
It is clear that

\be  \label{eq-A_0-3d}
\mathcal{A}_0:= \mathcal{A}(0, 0) = 
 \begin{pmatrix}
  \mathcal{S}_D &  -\mathcal{S}_D  \\
  -\f{1}{2}Id+ \mathcal{K}_D^{*}& 0
\end{pmatrix},
\ee
where, for $\psi \in L^2(\partial D)$ and $x \in \partial D$, 
$$ \begin{array}{lll}
\mathcal{S}_{D} [\psi](x) &=& \ds -  \frac{1}{4\pi}  \int_{\p D} \f{ \psi(y)}{|x-y|}d\sigma(y),\\
\nm
\mathcal{K}_{D}^{*} [\psi](x) & =&\ds - \frac{1}{4\pi} \int_{\p D } \f{(x-y)\cdot \nu_x}{|x-y|^3} \psi(y) d\sigma(y) . 
\end{array}
$$

Let $\chi_{\p D}$ denote the characteristic function of $\partial D$ and let $\mathcal{A}_0^*$ be the adjoint of $\mathcal{A}$.  
\begin{lem} We have
\begin{enumerate}
\item[(i)]
$ Ker (\mathcal{A}_0) = span\, \{ \Psi_0 \}$ where 
\[
\Psi_0 = \alpha_0 \begin{pmatrix}
    \psi_0\\
  \psi_0\end{pmatrix}
\]
with $\psi_0 = \mathcal{S}_D^{-1}[\chi_{\p D}]$ and the constant $\alpha_0$ being chosen such that $\|\Psi_0\|=1$;

\item[(ii)]
$ Ker (\mathcal{A}_0^*) = span\, \{ \Phi_0 \}$ where 
\[
\Phi_0 = \beta_0 \begin{pmatrix}
    0\\
  \phi_0\end{pmatrix}
\]
with $\phi_0 = \chi_{\p D}$ and the constant $\beta_0$ being chosen such that $\|\Phi_0\|=1$.
\end{enumerate}
\end{lem}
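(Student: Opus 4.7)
The plan is to exploit the block structure of $\mathcal{A}_0$ (the lower-right entry is zero) and reduce each of the two kernel equations to the classical one-dimensional kernels of $-\tfrac{1}{2}Id+\mathcal{K}_D^*$ and $-\tfrac{1}{2}Id+\mathcal{K}_D$ on $\partial D$. The two background ingredients that will be invoked are: (a) in $\R^3$, $\mathcal{S}_D:L^2(\partial D)\to H^1(\partial D)$ is invertible (so that $\psi_0=\mathcal{S}_D^{-1}[\chi_{\partial D}]$ is well defined); (b) these scalar kernels are one-dimensional, as follows from the solvability theory of the interior Neumann and exterior Dirichlet problems via layer potentials on $C^{1,s}$ boundaries, which can be quoted from \cite{akl}.

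For part (i), write $\Psi=(\psi_b,\psi)^T$ and unpack $\mathcal{A}_0\Psi=0$ row by row. The first row reads $\mathcal{S}_D[\psi_b-\psi]=0$, and invertibility of $\mathcal{S}_D$ forces $\psi_b=\psi$. The second row then collapses to $(-\tfrac{1}{2}Id+\mathcal{K}_D^*)[\psi_b]=0$. To identify this kernel, observe that $u=\mathcal{S}_D[\psi_0]$ is harmonic in $D$ with trace $\chi_{\partial D}$, hence $u\equiv 1$ in $D$, so $\p u/\p\nu|_-=0$; by the jump relation for the single layer this is exactly $(-\tfrac{1}{2}Id+\mathcal{K}_D^*)[\psi_0]=0$. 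Conversely, any element of the kernel yields, through $\mathcal{S}_D$, a constant harmonic function in $D$ and therefore a scalar multiple of $\psi_0$. Normalization in $\mathcal{H}=L^2(\partial D)\times L^2(\partial D)$ fixes $\alpha_0$ and gives the claimed $\Psi_0$.

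For part (ii), first identify $\mathcal{A}_0^*$ with respect to the $L^2\times L^2$ pairing, using self-adjointness of $\mathcal{S}_D$ and the fact that the adjoint of $\mathcal{K}_D^*$ is $\mathcal{K}_D$, which gives
\[
\mathcal{A}_0^* \;=\; \begin{pmatrix} \mathcal{S}_D & -\tfrac{1}{2}Id+\mathcal{K}_D \\ -\mathcal{S}_D & 0 \end{pmatrix}.
\]
Writing $\Phi=(\phi_1,\phi_2)^T$, the second row of $\mathcal{A}_0^*\Phi=0$ gives $\mathcal{S}_D[\phi_1]=0$, so $\phi_1=0$ by invertibility, and the first row reduces to $(-\tfrac{1}{2}Id+\mathcal{K}_D)[\phi_2]=0$. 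The dual argument for this kernel goes through the double layer potential: if $(-\tfrac{1}{2}Id+\mathcal{K}_D)\phi_2=0$ then $\mathcal{D}_D[\phi_2]$ solves the exterior Dirichlet problem with zero data and vanishes there, its hypersingular normal derivative is continuous across $\partial D$ and hence vanishes from inside, and the interior Neumann problem then forces $\mathcal{D}_D[\phi_2]$ to be constant in $D$; the jump $[\mathcal{D}_D\phi_2]=\phi_2$ shows $\phi_2$ is that constant, i.e.\ a multiple of $\chi_{\partial D}$. Checking the converse is an immediate Green's-formula computation showing $\mathcal{K}_D[\chi_{\partial D}]=\tfrac{1}{2}\chi_{\partial D}$. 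Normalization then yields $\Phi_0=\beta_0(0,\chi_{\partial D})^T$.

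The only step that is not pure $2\times 2$ matrix algebra is the one-dimensionality of the two scalar kernels; this is where I expect a reader to want a citation rather than a new argument, and I would simply point to the corresponding statements in \cite{akl}. Once those are in hand, the row-by-row reduction here is forced by the zero block in $\mathcal{A}_0$ (respectively $\mathcal{A}_0^*$), and the proof is short.
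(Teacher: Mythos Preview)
Your argument is correct. The paper actually states this lemma without proof, so there is no proof to compare against; your proposal supplies exactly the standard details one would expect: the block structure of $\mathcal{A}_0$ (and of $\mathcal{A}_0^*$) together with the invertibility of $\mathcal{S}_D$ in three dimensions reduces both kernel computations to the classical one-dimensional kernels of $-\tfrac{1}{2}Id+\mathcal{K}_D^*$ and $-\tfrac{1}{2}Id+\mathcal{K}_D$, and your identification of those kernels via the jump relations is the right way to do it. The only remark I would add is a small caveat on the functional setting: the paper takes $\mathcal{A}_0\in\mathcal{L}(\mathcal{H},\mathcal{H}_1)$ with $\mathcal{H}_1=H^1(\partial D)\times L^2(\partial D)$, but throughout it uses the $L^2\times L^2$ pairing $(\cdot,\cdot)$ when writing adjoints (see the proof of the lemma on $\tilde{\mathcal{A}_0}$), so your computation of $\mathcal{A}_0^*$ with respect to the $L^2$ pairing is consistent with the paper's conventions.
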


The above lemma shows that $\omega=0$ is a characteristic value for the operator-valued analytic function $\mathcal{A}(\omega, \delta)$. By the Gohberg-Sigal theory \cite{akl}, we can conclude the following result about the existence of the quasi-static resonance.
\begin{lem}
For any $\delta$, sufficiently small, there exists a characteristic value 
$\omega_0= \omega_0(\delta)$ to the operator-valued analytic function 
$\mathcal{A}(\omega, \delta)$
such that $\omega_0(0)=0$ and 
$\omega_0$ depends on $\delta$ continuously. This characteristic value is also the quasi-static resonance (or Minnaert resonance). 
\end{lem}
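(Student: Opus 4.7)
The plan is to apply the Gohberg–Sigal perturbation theorem for operator-valued analytic functions. The previous lemma establishes that $\omega=0$ is a characteristic value of the unperturbed pencil $\mathcal{A}(\omega,0)$ because $\ker(\mathcal{A}_0)$ is one-dimensional; the present lemma is a stability statement that says this characteristic value survives (and moves continuously) under the perturbation $\delta\to 0$.

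First I would check the structural hypotheses of Gohberg–Sigal. The single-layer operator $\mathcal{S}_D^k$ and $\mathcal{K}_D^{k,*}$ depend analytically on $\omega$ (via $k=\omega v$ and $k_b = \omega v_b$), as is well known from their series expansions near $k=0$ (indeed, such expansions are collected in the appendix the authors refer to). Consequently $\mathcal{A}(\omega,\delta)\in\mathcal{L}(\mathcal{H},\mathcal{H}_1)$ is jointly analytic in $\omega$ and in $\delta$ in a neighborhood of $(0,0)$. Moreover, $\mathcal{A}_0$ is Fredholm of index zero: the operator $\mathcal{S}_D:H^{-1/2}(\partial D)\to H^{1/2}(\partial D)$ is invertible in three dimensions, and $-\tfrac12 Id+\mathcal{K}_D^*$ is Fredholm of index zero on $L^2(\partial D)$. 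Since $\mathcal{A}(\omega,\delta)$ is a compact perturbation of an invertible operator on every bounded set away from the characteristic values, it is a \emph{normal} operator-valued analytic function in the sense of Gohberg–Sigal.

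Next I would localize the situation. Choose a sufficiently small positively oriented contour $\gamma\subset\mathbb{C}$ around the origin so that $\omega=0$ is the only characteristic value of $\mathcal{A}(\omega,0)$ enclosed by $\gamma$, and so that $\mathcal{A}(\omega,0)$ is invertible on $\gamma$. The generalized argument principle then yields
\begin{equation*}
\mathcal{N}(\delta):=\frac{1}{2\pi i}\,\mathrm{tr}\oint_{\gamma}\mathcal{A}(\omega,\delta)^{-1}\frac{\partial}{\partial\omega}\mathcal{A}(\omega,\delta)\,d\omega,
\end{equation*}
which is the total algebraic multiplicity of the characteristic values of $\mathcal{A}(\cdot,\delta)$ inside $\gamma$, and $\mathcal{N}(0)\geq 1$. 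Because $\mathcal{A}(\omega,\delta)-\mathcal{A}(\omega,0)$ is uniformly small on the compact set $\gamma$ for small $\delta$, and $\mathcal{A}(\omega,0)^{-1}$ is uniformly bounded on $\gamma$, a Neumann series argument shows that $\mathcal{A}(\omega,\delta)$ is invertible on $\gamma$ for $|\delta|$ small and that $\delta\mapsto\mathcal{N}(\delta)$ is continuous. Since it is integer-valued, $\mathcal{N}(\delta)=\mathcal{N}(0)\geq 1$, so at least one characteristic value $\omega_0(\delta)$ lies inside $\gamma$.

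Finally, continuity of $\omega_0(\delta)$ at $\delta=0$ follows by repeating the argument with arbitrarily small $\gamma$: for every $\varepsilon>0$, taking $\gamma$ to be the circle of radius $\varepsilon$ around the origin, the previous step supplies a $\delta_\varepsilon>0$ such that $|\omega_0(\delta)|<\varepsilon$ whenever $|\delta|<\delta_\varepsilon$. The main obstacle is the stability step, i.e.\ verifying that the $\delta$-perturbation is uniformly small on $\gamma$ in the operator norm of $\mathcal{L}(\mathcal{H},\mathcal{H}_1)$; this reduces to norm continuity of the layer-potential operators in their parameters, which is a standard consequence of the analytic dependence noted above.
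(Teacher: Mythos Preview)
Your proposal is correct and follows the same route as the paper, which simply invokes Gohberg--Sigal theory without spelling out any details; you have supplied the standard verification (analyticity of the pencil, Fredholm structure of $\mathcal{A}_0$, the generalized argument principle on a small contour, and integer-valued continuity of the multiplicity) that the paper leaves implicit. One minor remark: your argument gives $\mathcal{N}(0)\geq 1$, whereas the subsequent theorem in the paper reveals that the null multiplicity at $\omega=0$ is in fact $2$ (two resonances $\omega_{0,0}$ and $\omega_{0,1}$ bifurcate from the origin), but this does not affect the validity of the existence and continuity statement you are proving.
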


We next perform asymptotic analysis on the operator $\mathcal{A}(\omega, \delta)$. Using the results in Appendix \ref{sec-appendix-3d}, we can derive the following result. 

\begin{lem} In the space $\mathcal{L}(\mathcal{H}, \mathcal{H}_1)$, we have
\[
\mathcal{A}(\omega, \delta):=\mathcal{A}_0 + \mathcal{B}(\omega, \delta)
= \mathcal{A}_0 + \omega \mathcal{A}_{1, 0}+ \omega^2 \mathcal{A}_{2, 0}
+ \omega^3 \mathcal{A}_{3, 0} + \delta \mathcal{A}_{0, 1}+ \delta \omega^2\mathcal{A}_{2, 1} + O(\omega^4)+ O(\delta \omega^3),
\]
where 
\[
\mathcal{A}_{1, 0}=
\begin{pmatrix}
  \tau v\mathcal{S}_{D,1} &  -v\mathcal{S}_{D,1}  \\
  0& 0
\end{pmatrix},
\,\, \mathcal{A}_{2,0}= 
\begin{pmatrix}
  \tau^2 v^2\mathcal{S}_{D,2} &  -v^2\mathcal{S}_{D,2}  \\
  \tau^2 v^2\mathcal{K}_{D,2}& 0
\end{pmatrix},
\,\, \mathcal{A}_{3,0}= 
\begin{pmatrix}
  \tau^3 v^3\mathcal{S}_{D,3} &  -v^3\mathcal{S}_{D,3}  \\
 \tau^3 v^3\mathcal{K}_{D,3}& 0
\end{pmatrix},
\]
\[
\mathcal{A}_{0, 1}=
\begin{pmatrix}
0& 0\\
0 &  -(\f{1}{2}Id+ \mathcal{K}_{D}^*)
\end{pmatrix},
\,\,  \mathcal{A}_{2, 1}=
\begin{pmatrix}
0& 0\\
0 &  -v^2\mathcal{K}_{D,2}
\end{pmatrix}.
\]
\end{lem}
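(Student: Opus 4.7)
The plan is to prove the expansion entry-by-entry in the $2\times 2$ block matrix by plugging in the small-$k$ asymptotic expansions for the layer potential operators $\mathcal{S}_D^k$ and $\mathcal{K}_D^{k,*}$ (which are collected in Appendix~\ref{sec-appendix-3d}), then regrouping terms according to their joint order in $(\omega,\delta)$.

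Concretely, from the appendix we have expansions of the form
\[
\mathcal{S}_D^{k} = \mathcal{S}_D + k\,\mathcal{S}_{D,1} + k^2\,\mathcal{S}_{D,2} + k^3\,\mathcal{S}_{D,3} + O(k^4),
\qquad
\mathcal{K}_D^{k,*} = \mathcal{K}_D^* + k^2\,\mathcal{K}_{D,2} + k^3\,\mathcal{K}_{D,3} + O(k^4),
\]
where the crucial point in three dimensions is that there is no $k^1$ contribution in $\mathcal{K}_D^{k,*}$ (this is what makes the $\omega^1$ entry in the $(2,1)$ block vanish). First I would substitute $k=v\omega$ and $k_b=\tau v\omega$ into each block of the matrix $\mathcal{A}(\omega,\delta)$. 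The $(1,1)$ block $\mathcal{S}_D^{k_b}$ produces the $\mathcal{S}_D$ contribution to $\mathcal{A}_0$ together with $\tau v\,\mathcal{S}_{D,1}$, $\tau^2 v^2\,\mathcal{S}_{D,2}$, $\tau^3 v^3\,\mathcal{S}_{D,3}$ (matching the $(1,1)$ entries of $\mathcal{A}_{1,0},\mathcal{A}_{2,0},\mathcal{A}_{3,0}$); the $(1,2)$ block $-\mathcal{S}_D^k$ contributes analogously with $\tau=1$; the $(2,1)$ block $-\tfrac{1}{2}Id+\mathcal{K}_D^{k_b,*}$ produces $-\tfrac{1}{2}Id+\mathcal{K}_D^*$ at leading order, $0$ at order $\omega$, and $\tau^2 v^2\,\mathcal{K}_{D,2}$, $\tau^3 v^3\,\mathcal{K}_{D,3}$ at the next two orders.

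The $(2,2)$ block $-\delta(\tfrac{1}{2}Id+\mathcal{K}_D^{k,*})$ is expanded similarly and then factored so that the $\delta$ prefactor is kept explicit: the leading $\delta$-term yields $\mathcal{A}_{0,1}$, and the $\delta\omega^2$-term yields $\mathcal{A}_{2,1}$, while the $\delta\omega^3$ contribution is absorbed into the $O(\delta\omega^3)$ remainder. I would then simply collect all terms of total order $(\omega^p,\delta^q)$ and verify block-by-block that they agree with the claimed operators $\mathcal{A}_0,\mathcal{A}_{1,0},\mathcal{A}_{2,0},\mathcal{A}_{3,0},\mathcal{A}_{0,1},\mathcal{A}_{2,1}$.

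The one technical point that requires care, and which I would flag as the main (minor) obstacle, is verifying that the remainders are $O(\omega^4)$ and $O(\delta\omega^3)$ in the right operator norm, namely $\mathcal{L}(\mathcal{H},\mathcal{H}_1)=\mathcal{L}(L^2(\partial D)\times L^2(\partial D),\,H^1(\partial D)\times L^2(\partial D))$. For the first row this requires the appendix bounds on $\mathcal{S}_D^k-\mathcal{S}_D-\cdots$ to be stated in operator norm from $L^2(\partial D)$ into $H^1(\partial D)$ (i.e.\ with one-derivative gain), while for the second row the standard $L^2(\partial D)\to L^2(\partial D)$ bounds on $\mathcal{K}_D^{k,*}$ suffice. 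Assuming the kernel expansion of the Helmholtz Green's function is uniform on $\partial D\times\partial D$, these operator-norm estimates follow routinely from the kernel asymptotics, and the lemma is established.
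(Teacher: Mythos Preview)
Your proposal is correct and is exactly the approach the paper takes: the paper states that the lemma follows ``using the results in Appendix~\ref{sec-appendix-3d}'' and gives no further proof, so your entry-by-entry substitution of the expansions (\ref{series-s}) and (\ref{series-k}) with $k=v\omega$, $k_b=\tau v\omega$, together with $\mathcal{K}_{D,1}=0$, is precisely the intended argument. The operator-norm remainder bounds you flag are handled by Lemmas~\ref{lem-appendix11} and~\ref{lem-appendix13}, which give uniform bounds on $\mathcal{S}_{D,j}$ in $\mathcal{L}(L^2(\partial D),H^1(\partial D))$ and on $\mathcal{K}_{D,j}$ in $\mathcal{L}(L^2(\partial D))$, exactly matching the row-by-row norms required for $\mathcal{L}(\mathcal{H},\mathcal{H}_1)$.
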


We define a projection $\mathcal{P}_0$ from $\mathcal{H}$ to $\mathcal{H}_1$
by 
$$
\mathcal{P}_0[\Psi]:= (\Psi, \Psi_0)\Phi_0,
$$
and denote by
$$
\tilde{\mathcal{A}_0}= \mathcal{A}_0 + \mathcal{P}_0.
$$
The following results hold. 
\begin{lem} We have
\begin{enumerate}
\item[(i)]
The operator $\tilde{\mathcal{A}_0}$ is a bijective operator in
$\mathcal{L}(\mathcal{H}, \mathcal{H}_1)$. Moreover, 
$\tilde{\mathcal{A}_0}[\Psi_0]= \Phi_0 $;
\item[(ii)]
The adjoint of $\tilde{\mathcal{A}_0}$, $\tilde{\mathcal{A}_0}^*$, is a bijective operator in $\mathcal{L}(\mathcal{H}, \mathcal{H}_1)$. Moreover, 
$\tilde{\mathcal{A}_0}^*[\Phi_0] = \Psi_0$. 
\end{enumerate}
\end{lem}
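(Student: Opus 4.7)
The plan is to verify the explicit action $\tilde{\mathcal{A}_0}[\Psi_0] = \Phi_0$ directly and then derive bijectivity via a combination of Fredholm theory and injectivity. The identity is immediate: by the first lemma, $\Psi_0 \in \ker(\mathcal{A}_0)$, hence $\mathcal{A}_0[\Psi_0] = 0$, while $\mathcal{P}_0[\Psi_0] = \|\Psi_0\|^2 \Phi_0 = \Phi_0$ since $\|\Psi_0\| = 1$. To establish that $\tilde{\mathcal{A}_0} \in \mathcal{L}(\mathcal{H}, \mathcal{H}_1)$ is bijective, I would first argue that $\mathcal{A}_0$ is Fredholm of index zero, using that $\mathcal{S}_D : L^2(\p D) \to H^1(\p D)$ is invertible in three dimensions (for $\p D$ of class $C^{1,s}$) and the standard fact that $\pm \frac{1}{2} Id + \mathcal{K}_D^*$ are Fredholm of index zero on $L^2(\p D)$. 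The rank-one perturbation $\mathcal{P}_0$ preserves both the Fredholm property and its index, so $\tilde{\mathcal{A}_0}$ is also Fredholm of index zero, and it suffices to prove injectivity.

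For injectivity, suppose $\tilde{\mathcal{A}_0}[\Psi] = 0$, i.e.
\[
\mathcal{A}_0[\Psi] + (\Psi, \Psi_0)\Phi_0 = 0.
\]
Pairing with $\Phi_0$ in the $L^2$ inner product on $\mathcal{H}$ (legitimate because $\mathcal{H}_1 \hookrightarrow \mathcal{H}$) and using $\Phi_0 \in \ker(\mathcal{A}_0^*)$ together with $\|\Phi_0\|=1$, the first summand contributes $(\Psi, \mathcal{A}_0^* \Phi_0) = 0$, which yields $(\Psi, \Psi_0) = 0$. Consequently $\mathcal{A}_0[\Psi] = 0$, so $\Psi = c\Psi_0$ by the preceding lemma; plugging back forces $c=0$, hence $\Psi = 0$ and $\tilde{\mathcal{A}_0}$ is bijective.

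For part (ii), a direct computation gives $\mathcal{P}_0^*[\Phi] = (\Phi, \Phi_0)\Psi_0$, so $\tilde{\mathcal{A}_0}^*[\Phi_0] = \mathcal{A}_0^*[\Phi_0] + \|\Phi_0\|^2 \Psi_0 = \Psi_0$, and bijectivity of $\tilde{\mathcal{A}_0}^*$ is inherited from that of $\tilde{\mathcal{A}_0}$ by standard Hilbert-space duality. The main technical subtlety throughout is the treatment of the adjoint: because $\mathcal{A}_0$ acts between the distinct spaces $\mathcal{H}$ and $\mathcal{H}_1$, one must fix a consistent $L^2$ duality so that the statement $\Phi_0 \in \ker(\mathcal{A}_0^*)$ and the identity $(\mathcal{A}_0 \Psi, \Phi_0) = (\Psi, \mathcal{A}_0^* \Phi_0)$ are both meaningful. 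Once this is set up, the Fredholm alternative supplies the single structural ingredient required to pass from injectivity to bijectivity, and the remainder of the argument is bookkeeping.
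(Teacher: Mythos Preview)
Your argument is correct and follows the same line as the paper, only with considerably more detail on the bijectivity step. The paper's own proof is terse: it asserts bijectivity of $\tilde{\mathcal{A}_0}$ ``by construction'' together with the invertibility of $\mathcal{S}_D: L^2(\partial D) \to H^1(\partial D)$, and then computes $\mathcal{P}_0^*[\theta] = (\theta,\Phi_0)\Psi_0$ to obtain $\tilde{\mathcal{A}_0}^*[\Phi_0] = \Psi_0$ exactly as you do. Your explicit Fredholm-plus-injectivity argument is the natural way to unpack what the paper leaves implicit, and your remark about fixing the $L^2$ duality between $\mathcal{H}$ and $\mathcal{H}_1$ is a point the paper glosses over entirely.
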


\begin{proof}
By construction, and the fact that $\mathcal{S}_D$ is bijective from $L^2(\p D)$ to $H^1(\p D)$ \cite{book2}, we can show that $\tilde{\mathcal{A}_0}$ is a bijective. So too is $\tilde{\mathcal{A}_0}^*$. We only need to show that 
$\tilde{\mathcal{A}_0}^*[\Phi_0]= \Psi_0$.
Indeed, we can check that $\mathcal{P}_0^*[\theta]= (\theta, \Phi_0)\Psi_0$. Thus, it follows that 
$$
\tilde{\mathcal{A}_0}^* [\Phi_0] = \mathcal{P}_0^*[\Phi_0] = (\Phi_0, \Phi_0)\Psi_0
=\Psi_0,
$$
which completes the proof. 
\end{proof}

Our main result in this section is stated in the following theorem. 
\begin{thm} \label{thm-resoance}
In the  quasi-static regime, there exists two resonances for a single bubble: 
\beas
\omega_{0,0}(\delta) &=& \sqrt{ \f{{Cap}(D)}{\tau^2 v^2 Vol(D)}} \delta^{\f{1}{2}} -i\frac{{Cap(D)}^2}{8\pi \tau^2 v Vol(D) } \delta +O(\delta^{\f{3}{2}}),\\
\omega_{0,1}(\delta) &=& -\sqrt{ \f{Cap(D)}{\tau^2 v^2 Vol(D)}} \delta^{\f{1}{2}}  -i\frac{ Cap(D)^2}{8\pi \tau^2 v Vol(D) } \delta +O(\delta^{\f{3}{2}}),
\eeas 
where $Vol(D)$ is the volume of $D$ and $Cap(D):= - (\psi_0, \chi_{\p D}) = - (\mathcal{S}_{D}^{-1}[\chi_{\p D}], \chi_{\p D})$ is the capacity of $D$. The first resonance $\omega_{0,0}$ is called the Minnaert resonance. 
\end{thm}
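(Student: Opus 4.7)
The plan is a Lyapunov--Schmidt reduction coupled with Gohberg--Sigal theory. Since $\ker\mathcal{A}_0=\mathrm{span}\{\Psi_0\}$ and $\ker\mathcal{A}_0^*=\mathrm{span}\{\Phi_0\}$ are one-dimensional and $\tilde{\mathcal{A}}_0=\mathcal{A}_0+\mathcal{P}_0$ is bijective, I would decompose any candidate eigenvector as $\Psi=c\Psi_0+\Psi^\perp$ with $\Psi^\perp\perp\Psi_0$ in $\mathcal{H}$, split $\mathcal{A}(\omega,\delta)\Psi=0$ into its $\Phi_0$-component and $\Phi_0^\perp$-component in $\mathcal{H}_1$, solve the perpendicular component for
\[
\Psi^\perp = -c\,\mathcal{R}(I-\tilde P)\mathcal{B}(\omega,\delta)\Psi_0 + O(\|\mathcal{B}\|^2),
\]
where $\mathcal{R}$ denotes the pseudo-inverse of $(I-\tilde P)\mathcal{A}_0$ on $\Psi_0^\perp$ and $\mathcal{B}$ is the perturbation from the expansion lemma, and substitute back to obtain the scalar characteristic equation
\[
F(\omega,\delta) := \langle \mathcal{B}\Psi_0,\Phi_0\rangle - \langle \mathcal{B}\mathcal{R}(I-\tilde P)\mathcal{B}\Psi_0,\Phi_0\rangle + O(\|\mathcal{B}\|^3) = 0.
\]
By the generalized Rouch\'e theorem within the Gohberg--Sigal framework, the roots of $F$ are the characteristic values of $\mathcal{A}(\omega,\delta)$ near the origin.

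I would then use the ansatz $\omega=O(\sqrt\delta)$ and compute $F$ to the required order. For the leading balance, only the $\omega^2$ and $\delta$ terms in $\langle\mathcal{B}\Psi_0,\Phi_0\rangle$ survive, since $\omega\mathcal{A}_{1,0}\Psi_0$ lives in the first component while $\Phi_0$ lives in the second. The key evaluations are (i) $(\mathcal{K}_{D,2}\psi_0,\chi_{\p D})=-Vol(D)$, obtained from the Helmholtz identity $\Delta_x G_2 = -G_0$ together with the divergence theorem and the fact that $\mathcal{S}_D[\psi_0]\equiv 1$ in $\overline{D}$ (harmonic extension of its boundary value $\chi_{\p D}$); and (ii) $(\mathcal{A}_{0,1}\Psi_0,\Phi_0)=\alpha_0\beta_0\,Cap(D)$, using the equilibrium identity $\mathcal{K}_D^*\psi_0=\tfrac{1}{2}\psi_0$ (which is exactly the second row of $\mathcal{A}_0\Psi_0=0$) together with the definition of $Cap(D)$. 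The leading-order balance is therefore $\omega^2\tau^2 v^2 Vol(D) = \delta\,Cap(D)$, whose two roots yield the leading terms $\pm\sqrt{Cap(D)/(\tau^2 v^2 Vol(D))}\,\delta^{1/2}$ of $\omega_{0,0}$ and $\omega_{0,1}$.

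The $O(\delta)$ correction requires $F$ at order $\delta^{3/2}$, and is the hardest step. Three contributions appear at this order: (a) the direct term $\omega^3\langle\mathcal{A}_{3,0}\Psi_0,\Phi_0\rangle$, which evaluates via the divergence identity $\int_{\p D}(x-y)\cdot\nu_x\,d\sigma(x)=3\,Vol(D)$ to $-\alpha_0\beta_0\omega^3\tau^3 v^3\,iCap(D)Vol(D)/(4\pi)$; and (b), (c) the second-order Lyapunov--Schmidt contributions $\omega^2\langle\mathcal{A}_{2,0}\Psi^\perp,\Phi_0\rangle$ and $\delta\langle\mathcal{A}_{0,1}\Psi^\perp,\Phi_0\rangle$, where $\Psi^\perp$ is sourced at leading order by $-c\omega\mathcal{A}_{1,0}\Psi_0$ and carries the imaginary factor $\mathcal{S}_{D,1}[\psi_0]=iCap(D)/(4\pi)$. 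After using the leading balance $\omega^2\tau^2 v^2 Vol(D)=\delta\,Cap(D)$ to convert the $\delta\omega$ in (c) into $\omega^3$, all three contributions combine to $-\alpha_0\beta_0\omega^3\tau^2 v^3\,iCap(D)Vol(D)/(4\pi)$: the $\tau^3$ of (a) is replaced by $\tau^2$ after cancellation with (b)+(c). Writing $\omega=\omega_1\sqrt\delta+\omega_2\delta+O(\delta^{3/2})$ and extracting the $\delta^{3/2}$ coefficient of $F$ then gives $\omega_2=-iCap(D)^2/(8\pi\tau^2 v\,Vol(D))$, identical for both branches.

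The main obstacle is precisely this cancellation: retaining only the direct $\omega^3\mathcal{A}_{3,0}$ term produces an incorrect prefactor $1/(\tau v)$, and it is the Lyapunov--Schmidt correction $\Psi^\perp$ --- sourced by the $\omega\mathcal{A}_{1,0}\Psi_0$ term that individually pairs trivially with $\Phi_0$ --- that restores the correct $1/(\tau^2 v)$. Once $F(\omega,\delta)=0$ has been verified along both formal branches to order $\delta^{3/2}$, the implicit function theorem (or Weierstrass preparation within the Gohberg--Sigal framework) upgrades these formal expansions to honest characteristic values of $\mathcal{A}(\omega,\delta)\in\mathcal{L}(\mathcal{H},\mathcal{H}_1)$ for $\delta$ sufficiently small, completing the proof.
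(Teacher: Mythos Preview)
Your proposal is correct and follows essentially the same route as the paper: a Lyapunov--Schmidt reduction using the rank-one modification $\tilde{\mathcal{A}}_0$, reduction to a scalar characteristic equation, explicit evaluation of $(\mathcal{A}_{2,0}\Psi_0,\Phi_0)$, $(\mathcal{A}_{3,0}\Psi_0,\Phi_0)$, $(\mathcal{A}_{0,1}\Psi_0,\Phi_0)$ and the two second-order cross terms $(\mathcal{A}_{2,0}\tilde{\mathcal{A}}_0^{-1}\mathcal{A}_{1,0}\Psi_0,\Phi_0)$, $(\mathcal{A}_{0,1}\tilde{\mathcal{A}}_0^{-1}\mathcal{A}_{1,0}\Psi_0,\Phi_0)$ (your (b) and (c)), followed by the ansatz $\omega=a_1\delta^{1/2}+a_2\delta+\dots$. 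The only cosmetic difference is that the paper keeps the $\omega^3$ and $\omega\delta$ contributions separate in the scalar equation and solves for $a_2$ at the end, whereas you substitute the leading balance earlier to collapse them into a single $\omega^3\tau^2$ coefficient; the computations and the final answer coincide.
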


\begin{proof}
Step 1.  We find the resonance by solving the following equation
\be \label{eq-resonance-1}
\mathcal{A}(\omega, \delta)[\Psi_{\delta}] =0. 
\ee
Write $\Psi_{\delta} = \Psi_0 + \Psi_1$ and assume the orthogonality condition
\be \label{orthog}
(\Psi_1, \Psi_0)=0. 
\ee

Step 2. Since $\tilde{\mathcal{A}_0}= \mathcal{A}_0 + \mathcal{P}_0$, (\ref{eq-resonance-1}) is equivalent to the following 
$$
(\tilde{\mathcal{A}_0} - \mathcal{P}_0 + \mathcal{B}) [\Psi_0 + \Psi_1]=0.
$$
Observe that as the operator $\tilde{\mathcal{A}_0} + \mathcal{B}$ is invertible for sufficiently small $\delta$ and $\omega$, we can apply $(\tilde{\mathcal{A}_0} + \mathcal{B})^{-1}$ to both sides of the above equation to deduce that
\be  \label{eq-resonance-2}
\Psi_1= (\tilde{\mathcal{A}_0} + \mathcal{B})^{-1} \mathcal{P}_0 [\Psi_0] - \Psi_0
= (\tilde{\mathcal{A}_0} + \mathcal{B})^{-1}[\Phi_0] - \Psi_0.
\ee

Step 3. Using the orthogonality condition (\ref{orthog}), we arrive at the following equation: 
\be  \label{eq-algebraic}
A(\omega, \delta):= \left((\tilde{\mathcal{A}_0} + \mathcal{B})^{-1} [\Phi_0], \Psi_0\right) - 1=0
\ee

Step 4. We calculate $A(\omega, \delta)$. 
Using the identity
$$
(\tilde{\mathcal{A}_0} + \mathcal{B})^{-1} 
= \left(Id+ \tilde{\mathcal{A}_0}^{-1} \mathcal{B}\right)^{-1} \tilde{\mathcal{A}_0}^{-1}
= \left( Id- \tilde{\mathcal{A}_0}^{-1}\mathcal{B} + \tilde{\mathcal{A}_0}^{-1}\mathcal{B}\tilde{\mathcal{A}_0}^{-1}\mathcal{B}+...\right)\tilde{\mathcal{A}_0}^{-1}, 
$$
and the fact that 
$$
\tilde{\mathcal{A}_0}^{-1}[\Phi_0] = \Psi_0,
$$
we obtain
\beas
A(\omega, \delta)
&=& -\omega \left( \mathcal{A}_{1,0}[\Psi_0], \Phi_0\right)
-\omega^2 \left( \mathcal{A}_{2,0}[\Psi_0], \Phi_0\right)
-\omega^3 \left( \mathcal{A}_{3,0}[\Psi_0], \Phi_0\right)  -\delta \left( \mathcal{A}_{0,1}[\Psi_0], \Phi_0\right)\\
&& + \omega^2 \left( \mathcal{A}_{1,0}\tilde{\mathcal{A}_0}^{-1}\mathcal{A}_{1,0}[\Psi_0], \Phi_0\right) 
+ \omega^3 \left( \mathcal{A}_{1,0}\tilde{\mathcal{A}_0}^{-1}\mathcal{A}_{2,0}[\Psi_0], \Phi_0\right) 
+\omega^3 \left( \mathcal{A}_{2,0}\tilde{\mathcal{A}_0}^{-1}\mathcal{A}_{1,0}[\Psi_0], \Phi_0\right) \\
&& + \omega \delta \left( \mathcal{A}_{1,0}\tilde{\mathcal{A}_0}^{-1}\mathcal{A}_{0,1}[\Psi_0], \Phi_0\right)  
 + \omega \delta \left( \mathcal{A}_{0,1}\tilde{\mathcal{A}_0}^{-1}\mathcal{A}_{1,0}[\Psi_0], \Phi_0\right) \\
&& + \omega^3 \left( \mathcal{A}_{1,0}\tilde{\mathcal{A}_0}^{-1}\mathcal{A}_{1,0}\tilde{\mathcal{A}_0}^{-1}\mathcal{A}_{1,0}[\Psi_0], \Phi_0\right) + O(\omega^4)+ O(\delta^2).  
\eeas
%

It is clear that $\mathcal{A}_{1,0}^*[\Phi_0]=0$. Consequently, we get
\beas
A(\omega, \delta)&=& 
-\omega^2 \left( \mathcal{A}_{2,0}[\Psi_0], \Phi_0\right)-\omega^3 \left( \mathcal{A}_{3,0}[\Psi_0], \Phi_0\right) -\delta \left( \mathcal{A}_{0,1}[\Psi_0], \Phi_0\right) 
\\
&& + \omega^3 \left( \mathcal{A}_{2,0}\tilde{\mathcal{A}_0}^{-1}\mathcal{A}_{1,0}[\Psi_0], \Phi_0\right)+ \omega \delta \left( \mathcal{A}_{0,1}\tilde{\mathcal{A}_0}^{-1}\mathcal{A}_{1,0}[\Psi_0], \Phi_0\right) + O(\omega^4)+ O(\delta^2).
\eeas

In the next four steps, we calculate the terms
$\left( \mathcal{A}_{2,0}[\Psi_0], \Phi_0\right)$,  $\left( \mathcal{A}_{3,0}[\Psi_0], \Phi_0\right)$, $\left( \mathcal{A}_{0,1}[\Psi_0], \Phi_0\right)$,
$\left( \mathcal{A}_{2,0}\tilde{\mathcal{A}_0}^{-1}\mathcal{A}_{1,0}[\Psi_0], \Phi_0\right)$ and 
$\left( \mathcal{A}_{0,1}\tilde{\mathcal{A}_0}^{-1}\mathcal{A}_{1,0}[\Psi_0], \Phi_0\right)$. 

Step 5. We have
\beas
\left( \mathcal{A}_{2,0}[\Psi_0], \Phi_0\right)&=& \alpha_0\beta_0 \tau^2v^2 
\left( \mathcal{K}_{D, 2}[\psi_0], \phi_0 \right)= \alpha_0\beta_0 \tau^2v^2 
\left( \psi_0, \mathcal{K}_{D, 2}^*[\phi_0] \right) \\ 
&=& -\alpha_0\beta_0 \tau^2v^2 
\int_{\p D} d\sigma(x)\mathcal{S}_{D}^{-1}[\chi_{\p D}](x) \int_{D}dy G(x, y,0) \chi_{\p D}(y) \\
&=& -\alpha_0\beta_0 \tau^2v^2  \int_{D}dy  \int_{\p D} d\sigma(x) G(x, y, 0)\mathcal{S}_{D}^{-1}[\chi_{\p D}](x)  \\
&=& -\alpha_0\beta_0 \tau^2v^2 \int_{D} \chi(y)dy \\
&=& -\alpha_0\beta_0 \tau^2v^2 Vol (D).  
\eeas

Step 6. On the other hand, we have
\beas
\left( \mathcal{A}_{3,0}[\Psi_0], \Phi_0\right)&=& \alpha_0\beta_0 \tau^3v^3 
\left( \psi_0, \mathcal{K}_{D, 3}^*[\phi_0] \right)
=\alpha_0\beta_0 \tau^3v^3\left( \psi_0,  \f{i}{4\pi} Vol (D) \right)\\
& = & \alpha_0\beta_0 \tau^3v^3 Vol (D) \f{i}{4\pi}\left( \mathcal{S}_{D}^{-1}[\chi_{\p D}],  \chi_{\p D} \right) = - \alpha_0\beta_0 \tau^3v^3 Vol (D) \f{i}{4\pi} Cap(D).
\eeas

Step 7. It is easy to see that
$$
\left( \mathcal{A}_{0,1}[\Psi_0], \Phi_0\right)
= -(\psi_0, \phi_0) = -\alpha_0\beta_0 \left(\mathcal{S}_{D}^{-1} [\chi_{\p D}],  \chi_{\p D} \right) =  \alpha_0\beta_0 Cap(D).
$$

Step 8. We now calculate the term $\left( \mathcal{A}_{0,1}\tilde{\mathcal{A}_0}^{-1}\mathcal{A}_{1,0}[\Psi_0], \Phi_0\right) $.
We have
\beas
\mathcal{A}_{1, 0}[\Psi_0]&=&
\begin{pmatrix}
   (\tau-1)v \mathcal{S}_{D,1}[\psi_0]\\
  0
\end{pmatrix}
=\begin{pmatrix}
  (\tau-1)v \f{i}{4 \pi} Cap(D) \chi_{\p D} \\
   0
\end{pmatrix},\\
\mathcal{A}_{0, 1}^*[\Phi_0]&=&
\begin{pmatrix}
  0\\
  -\left(\f{1}{2}Id+ K_D \right)[\phi_0]
\end{pmatrix}
=\begin{pmatrix}
  0\\
  -\phi_0
\end{pmatrix}
=-\begin{pmatrix}
  0\\
  \chi_{\p D}
\end{pmatrix} . 
\eeas

We need to calculate 
\[ 
\tilde{\mathcal{A}}_0^{-1} \begin{pmatrix}
  \chi_{\p D}\\
  0
\end{pmatrix}. 
\]
Assume that 
\[
\left( \mathcal{A}_0 + \mathcal{P}_0 \right) 
\begin{pmatrix}
  y_b\\
  y
\end{pmatrix}
= \begin{pmatrix}
  \mathcal{S}_D [y_b-y]\\
  (-\f{1}{2}Id + \mathcal{K}_D^*)[y_b] 
  \end{pmatrix}
  + \left( (y_b, \psi_0)+ (y, \psi_0)    \right)\begin{pmatrix}
  0\\
  \phi_0
\end{pmatrix}
=\begin{pmatrix}
  \chi(\p D)\\
  0
\end{pmatrix}
\]
By solving the above equations directly, we obtain that
$ y_b= \f{1}{2} \psi_0, y= -\f{1}{2} \psi_0$. Therefore, 
\[
\tilde{\mathcal{A}}_0^{-1} \begin{pmatrix}
  \chi_{\p D}\\
  0
\end{pmatrix} = 
\begin{pmatrix}
\f{1}{2} \psi_0\\
-\f{1}{2} \psi_0
\end{pmatrix}.
\]
It follows that
\[
\left( \mathcal{A}_{0,1}\tilde{\mathcal{A}_0}^{-1}\mathcal{A}_{1,0}[\Psi_0], \Phi_0\right) = (\tau -1)v \f{i}{8 \pi} Cap(D) (\psi_0, \phi_0)
= (1- \tau) v \f{i}{8 \pi} Cap(D)^2\alpha_0\beta_0.
\]

Step 9. We calculate the term $\left( \mathcal{A}_{2,0}\tilde{\mathcal{A}_0}^{-1}\mathcal{A}_{1,0}[\Psi_0], \Phi_0\right) $. Using the results in Step 8, we obtain
\beas
\left( \mathcal{A}_{2,0}\tilde{\mathcal{A}_0}^{-1}\mathcal{A}_{1,0}[\Psi_0], \Phi_0\right) 
&=& \left( \tilde{\mathcal{A}_0}^{-1}\mathcal{A}_{1,0}[\Psi_0], \mathcal{A}_{2,0}^* [\Phi_0]\right) \\
&=&  \f{i(\tau-1)\tau^2v^3}{8 \pi} Cap(D) \alpha_0 \beta_0 \, \big(\psi_0, \mathcal{K}_{D, 2}^*[\phi_0] \big)\\
&=& \f{i(1 -\tau)\tau^2v^3}{8 \pi} Cap(D) Vol(D) \alpha_0 \beta_0.
\eeas

Step 10. Considering the above the results, we can derive
\beas
A(\omega, \delta) &= &\alpha_0\beta_0 \left( 
\tau^2 v^2 Vol(D) \omega^2 + \f{i\tau^2(\tau+1) v^3 Vol(D) Cap(D)}{8\pi} \omega^3 
 - Cap(D) \delta - \f{i(\tau-1)v Cap(D)^2}{8 \pi} \omega \delta 
 \right) \\
 && + O(\omega^4) + O(\delta^2).
\eeas

We now solve $A(\omega, \delta) =0$. 
It is clear that $\delta = O(\omega^2)$, and thus $\omega_0(\delta) = O(\sqrt{\delta})$. 
Write 
$$
\omega_0(\delta) = a_1 \delta^{\f{1}{2}} + a_2 \delta + O (\delta^{\f{3}{2}}). 
$$
We get
\beas
&&\tau^2 v^2 Vol(D) \left(a_1 \delta^{\f{1}{2}} + a_2 \delta + O (\delta^{\f{3}{2}})\right)^2 + \f{i\tau^2(\tau+1) v^3 Vol(D) Cap(D)}{8\pi} \left(a_1 \delta^{\f{1}{2}} + a_2 \delta + O (\delta^{\f{3}{2}})\right)^3 \\
&& - Cap(D) \delta - \f{i(\tau-1)v Cap(D)^2}{8 \pi} \left(a_1 \delta^{\f{1}{2}} + a_2 \delta + O (\delta^{\f{3}{2}})\right) \delta
+O(\delta^2)=0.
\eeas
From the coefficients of the $\delta$ and $\delta^{\f{3}{2}}$ terms, we obtain 
\beas
&& \tau^2 v^2 Vol(D)a_1^2 - Cap(D) =0, \\
&& 2\tau^2 v^2 Vol(D)a_1a_2 + \f{i\tau^2(\tau+1) v^3 Vol(D) Cap(D)}{8\pi}a_1^3
- \f{i(\tau-1)v Cap(D)^2}{8 \pi} a_1=0,
\eeas 
which yields 
 \beas
a_1 &=& \pm \sqrt{ \f{Cap(D)}{\tau^2 v^2 Vol(D)}},\\
a_2&=& -\f{i(\tau+1) v Cap(D)}{16\pi}a_1^2 + \f{i(\tau-1) Cap(D)^2}{16 \pi \tau^2 v Vol(D)}
= -\frac{i (\tau+1) Cap(D)^2}{16\pi \tau^2 v Vol(D) }+ \f{i(\tau-1) Cap(D)^2}{16 \pi \tau^2 v Vol(D)}\\
&=& \frac{-i Cap(D)^2}{8\pi \tau^2 v Vol(D) }.
\eeas 
This complete the proof of the theorem.

\end{proof}

A few remarks are in order.

\begin{rmk}
Using the method developed above, we can derive the Minnaert resonance for a single bubble in two dimensions. The main differences between the two-dimensional case and the three-dimensional case are explained in Appendix \ref{appendixB}. 
\end{rmk}

\begin{rmk}
Using the method developed above, we can also obtain the full asymptotic expansion for the resonance with respect to the small parameter $\delta$.
\end{rmk}

\begin{rmk}
In the case of a collection of $N$ identical bubbles, with separation distance much larger than their characteristic sizes, the Minnaert resonance for a single bubble will be split into N resonances. The splitting will be related to the eigenvalues of a N-by-N matrix which encodes information on the configuration of the N bubbles. This can be proved by a similar argument as in \cite{hai}.
\end{rmk}

\begin{rmk}
Taking into consideration the above theorem, we can deduce that if the bubble is represented by $D= t B$ for some small positive number $t$ and a normalized domain $B$ with size of order one, then the Minnaert resonance for $D$ is given by
the following formula
\beas
\omega_{0,0}(\delta) = \f{1}{t} \left[\sqrt{ \f{Cap(B)}{\tau^2 v^2 Vol(B)}} \delta^{\f{1}{2}} -i\frac{ Cap(B)^2}{8\pi \tau^2 v Vol(B) } \delta +O(\delta^{\f{3}{2}}) \right]. 
\eeas 

\end{rmk}

\begin{rmk}
In the special case when $D$ is the unit sphere, we have $Cap(D) = 4\pi$,
$Vol(D)= \frac{4 \pi}{3}$. Consequently,  
\beas
\sqrt{ \f{Cap(D)}{\tau^2 v^2 Vol(D)}}&=& 
 \sqrt {3} \frac{1}{v_b}, \\
\frac{ Cap(D)^2}{8\pi \tau^2 v Vol(D) }&=& \f{3}{2 \tau^2 v}. 
\eeas
Therefore, the Minnaert resonance is given by 
\beas
\omega_{0,0} (\delta) &=&  \sqrt {3} \frac{1}{v_b} \delta^{\f{1}{2}} 
-i \f{3}{2 \tau^2 v} \delta + O(\delta^{\f{3}{2}}),\\
&=& \sqrt{\f{3\kappa_b}{\rho}} -i \f{3}{2} \kappa_b \sqrt{\f{1}{\rho \kappa}} +O((\f{\rho_b}{\rho})^{\f{3}{2}}).  
\eeas
\end{rmk}


\section{The point scatterer approximation} \label{sec-3d-point-scatter}
We now solve the scattering problem (\ref{eq-scattering})
with $u^{in}= e^{ikd\cdot x}$. This models the case when the bubble is excited by sources in the far field (throughout the paper, a point $x$ is said to be in the far field of the bubble $D$ if the distance between $x$ and $D$ is much larger than the size of $D$). 
The problem is equivalent to equation (\ref{eq-boundary})
with $F$ being determined by 
\[
F= 
\begin{pmatrix}
u^{in}\\
\delta \f{\partial u^{in}}{\partial \nu}
\end{pmatrix}.
\]

We need the following lemma. 
\begin{lem} \label{lem-estimate1}
The following estimates hold in $\mathcal{H}$:
\[
(\tilde{\mathcal{A}_0}+ \mathcal{B})^{-1}[F] = u^{in}(y_0)
\begin{pmatrix}
\f{1}{2} \psi_0\\
-\f{1}{2} \psi_0
\end{pmatrix}+ 
O(\omega)+ O(\delta).
\]
\end{lem}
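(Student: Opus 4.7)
The plan is to expand $F$ asymptotically as $\omega,\delta\to 0$, then invert $\tilde{\mathcal{A}}_0+\mathcal{B}$ via a Neumann series, and finally read off the leading order using the explicit inversion already worked out in Step 8 of the proof of Theorem \ref{thm-resoance}.

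First I would Taylor expand $u^{in}(x)=e^{ikd\cdot x}$ around a fixed reference point $y_0\in D$. Since $|k|=|\omega|v$ and $\p D$ is bounded, one obtains
$
u^{in}(x)=u^{in}(y_0)+O(\omega)
$
uniformly in $H^1(\p D)$, and $\p u^{in}/\p\nu=ik\,d\cdot\nu\,e^{ikd\cdot x}=O(\omega)$ in $L^2(\p D)$. Consequently, in the space $\mathcal{H}_1$,
\[
F=\begin{pmatrix} u^{in}\\ \delta\,\p u^{in}/\p\nu\end{pmatrix}
=u^{in}(y_0)\begin{pmatrix}\chi_{\p D}\\ 0\end{pmatrix}+O(\omega)+O(\delta).
\]

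Next I would invoke the fact, established in the preceding lemma, that $\tilde{\mathcal{A}}_0$ is bijective from $\mathcal{H}$ to $\mathcal{H}_1$, together with the asymptotic expansion $\mathcal{B}(\omega,\delta)=O(\omega)+O(\delta)$ in $\mathcal{L}(\mathcal{H},\mathcal{H}_1)$ from the lemma describing $\mathcal{A}_{i,j}$. Hence $\tilde{\mathcal{A}}_0^{-1}\mathcal{B}$ is small in $\mathcal{L}(\mathcal{H},\mathcal{H})$ for $\omega,\delta$ small, and the Neumann series gives
\[
(\tilde{\mathcal{A}}_0+\mathcal{B})^{-1}
=\tilde{\mathcal{A}}_0^{-1}+O(\omega)+O(\delta)
\]
as operators from $\mathcal{H}_1$ to $\mathcal{H}$.

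Applying this to the expansion of $F$ yields
\[
(\tilde{\mathcal{A}}_0+\mathcal{B})^{-1}[F]
=u^{in}(y_0)\,\tilde{\mathcal{A}}_0^{-1}\!\begin{pmatrix}\chi_{\p D}\\ 0\end{pmatrix}+O(\omega)+O(\delta),
\]
and the explicit inversion performed in Step 8 of the proof of Theorem \ref{thm-resoance} already furnishes $\tilde{\mathcal{A}}_0^{-1}(\chi_{\p D},0)^{T}=(\tfrac12\psi_0,-\tfrac12\psi_0)^{T}$, which gives the claimed identity.

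The only real subtlety is bookkeeping of norms: one must check that $F-u^{in}(y_0)(\chi_{\p D},0)^T$ is $O(\omega)+O(\delta)$ in the target space $\mathcal{H}_1$ (not just pointwise), and that the error in the Neumann series is measured in $\mathcal{L}(\mathcal{H}_1,\mathcal{H})$ so that the composite error remains $O(\omega)+O(\delta)$ in $\mathcal{H}$. Once these norm considerations are handled, the remainder is routine.
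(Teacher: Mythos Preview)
Your proposal is correct and follows essentially the same approach as the paper: split $F$ into the leading piece $u^{in}(y_0)(\chi_{\p D},0)^T$ plus a remainder that is $O(\omega)$ in $\mathcal{H}_1$, expand $(\tilde{\mathcal{A}}_0+\mathcal{B})^{-1}=\tilde{\mathcal{A}}_0^{-1}+O(\omega)+O(\delta)$ via a Neumann series, and then invoke the explicit inversion of $(\chi_{\p D},0)^T$ from Step~8 of Theorem~\ref{thm-resoance}. The only cosmetic difference is that the paper writes the remainder of $F$ as $O(\omega)$ (since the second component $\delta\,\p u^{in}/\p\nu$ is actually $O(\delta\omega)$), whereas you allow the looser $O(\omega)+O(\delta)$; either works.
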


\begin{proof}
Let $
F= F_1+ F_2,
$
where 
\[
F_1= 
\begin{pmatrix}
u^{in}(y_0) \chi_{\p D}\\
0
\end{pmatrix}
, \quad F_2= F-F_1 =
\begin{pmatrix}
O(\omega)\\
\delta \f{\partial u^{in}}{\partial \nu}
\end{pmatrix}.
\]
It is clear that $F_2 = O(\omega)$ in $\mathcal{H}_1$. Using the fact that 
$$
(\tilde{\mathcal{A}_0}+ \mathcal{B})^{-1} = \tilde{\mathcal{A}_0}^{-1} + O(\omega) + O(\delta),
$$
we obtain 
\beas
(\tilde{\mathcal{A}_0}+ \mathcal{B})^{-1}[F] &=&
 (\tilde{\mathcal{A}_0}+ \mathcal{B})^{-1}[F_1] + (\tilde{\mathcal{A}_0}+ \mathcal{B})^{-1}[F_2], \\
 &=& \tilde{\mathcal{A}_0}^{-1}[F_1] + O(\omega)+ O(\delta), \\
&=& u^{in}(y_0)
\begin{pmatrix}
\f{1}{2} \psi_0\\
-\f{1}{2} \psi_0
\end{pmatrix}+ 
O(\omega) + O(\delta),
\eeas
which is the desired result. 
\end{proof}

%
%
%
%

The following monopole approximation holds. 
\begin{thm} \label{thm2}
In the far field, the solution to the scattering problem (\ref{eq-scattering})
has the following point-wise behavior 
\beas
u^s(x)= g(\omega, \delta, D)\left( 1+ O(\omega) + O(\delta) + o(1)\right) u^{in}(y_0) 
G(x, y_0, k), 
\eeas
where $y_0$ is the center of the bubble and the scattering coefficient $g$ is given below: 
\begin{enumerate}
\item[(i)]
Regime I:  $\omega \ll \sqrt{\delta}$,
\be  \label{g-3}
g(\omega, \delta, D) =O(\f{\omega^2}{\delta})+O(\omega);
\ee
\item[(ii)]
Regime II:  $\f{\omega}{\sqrt{\delta}} =O(1)$, 
\be  \label{g-2}
g(\omega, \delta, D) =  
\f{Cap(D)} {1- (\f{\omega_M}{\omega})^2 + i\gamma},
\ee 
where 
\[
\omega_M= \sqrt{\f{Cap(D) \delta}{\tau^2 v^2 Vol(D)} },\quad
\gamma = \f{(\tau +1)v Cap(D)\omega}{8 \pi} - \f{(\tau-1)Cap(D)^2 \delta}{8 \pi \tau^2 v Vol(D) \omega}
\]
are called the Minnaert resonance frequency and the damping constant respectively.  
In particular, the  Minnaert resonance occurs  in this regime.
\item[(iii)]
Regime III: $\sqrt{\delta} \ll \omega \ll 1$, 

\be  \label{g-1}
g(\omega, \delta, D) = Cap(D) + O(\frac{\delta}{\omega}).
\ee
\end{enumerate}
\end{thm}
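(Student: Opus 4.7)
The plan has three main steps: reduce $u^s$ in the far field to a single scalar, evaluate that scalar by mimicking the decomposition used to prove Theorem~\ref{thm-resoance}, and then match the resulting expansion to each of the three regimes.

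First I would use the single-layer representation $u^s(x)=\mathcal{S}_D^k[\psi](x)$ together with the Taylor expansion $G(x,y,k)=G(x,y_0,k)+O(|y-y_0|/\mathrm{dist}(x,D))$ for $y\in\partial D$ to write $u^s(x)=G(x,y_0,k)(\psi,\chi_{\partial D})(1+o(1))$. Since $\Phi_0=\beta_0(0,\chi_{\partial D})^T$, the whole statement reduces to evaluating $(\psi,\chi_{\partial D})=(\Psi,\Phi_0)/\beta_0$.

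Next, following the decomposition $\mathcal{A}=\tilde{\mathcal{A}}_0+\mathcal{B}-\mathcal{P}_0$ from Theorem~\ref{thm-resoance}, I would convert $\mathcal{A}\Psi=F$ into
\[
\Psi=(\tilde{\mathcal{A}}_0+\mathcal{B})^{-1}F+c\,(\tilde{\mathcal{A}}_0+\mathcal{B})^{-1}\Phi_0,\qquad c=\frac{\bigl((\tilde{\mathcal{A}}_0+\mathcal{B})^{-1}F,\Psi_0\bigr)}{-A(\omega,\delta)},
\]
and then pair with $\Phi_0$ so that $(\Psi,\Phi_0)$ becomes a combination of three scalar inner products. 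By Lemma~\ref{lem-estimate1} and the identity $\tilde{\mathcal{A}}_0^{-1}\Phi_0=\Psi_0$, the brackets $((\tilde{\mathcal{A}}_0+\mathcal{B})^{-1}F,\Phi_0)$ and $((\tilde{\mathcal{A}}_0+\mathcal{B})^{-1}\Phi_0,\Phi_0)$ equal $\tfrac12\beta_0u^{in}(y_0)\mathrm{Cap}(D)$ and $-\alpha_0\beta_0\mathrm{Cap}(D)$ at leading order. The numerator of $c$ is the delicate term: $(F,\Phi_0)=O(\delta\omega^2)$ is subleading in Regime~II, and the leading pairing $(\tilde{\mathcal{A}}_0^{-1}F,\Psi_0)$ vanishes because the components of $\tilde{\mathcal{A}}_0^{-1}F$ are opposite in sign while those of $\Psi_0$ are equal. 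I would therefore reproduce Steps~5--9 of the proof of Theorem~\ref{thm-resoance} with $\Psi_0$ replaced by $\tilde{\mathcal{A}}_0^{-1}[F_1]=u^{in}(y_0)(\tfrac12\psi_0,-\tfrac12\psi_0)^T$; this recovers the expansion of $A(\omega,\delta)$ but with a symmetric sign flip that turns $-\mathrm{Cap}(D)\delta$ into $+\mathrm{Cap}(D)\delta$ and exchanges the prefactors $(\tau-1)\leftrightarrow(\tau+1)$ in the cubic imaginary terms.

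Finally, inserting these expansions into the formula for $(\Psi,\Phi_0)/\beta_0$, the two contributions combine into a single fraction whose numerator telescopes to $2\tau^2v^2\mathrm{Vol}(D)\omega^2$ plus an $O(\omega^3)+O(\omega\delta)$ imaginary remainder, and whose denominator is exactly $A(\omega,\delta)/(\alpha_0\beta_0)$. Dividing through by $\tau^2v^2\mathrm{Vol}(D)\omega^2$ produces the Regime~II formula $g=\mathrm{Cap}(D)/(1-(\omega_M/\omega)^2+i\gamma)$ with the stated $\omega_M$ and $\gamma$, modulo multiplicative factors $1+O(\omega)+O(\delta)$; the other two regimes follow as asymptotic limits of this same expression, with $\omega\ll\sqrt\delta$ producing the leading cancellation $g=O(\omega^2/\delta)+O(\omega)$ and $\omega\gg\sqrt\delta$ giving $g=\mathrm{Cap}(D)+O(\delta/\omega)$. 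The main obstacle is precisely the leading-order cancellation in the numerator of $c$, which forces the second-order expansion and the careful sign bookkeeping that ultimately produces the resonant denominator.
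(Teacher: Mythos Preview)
Your proposal is correct and follows essentially the same route as the paper. The paper decomposes $\Psi=\alpha\,u^{in}(y_0)\Psi_0+\Psi_1$ with $(\Psi_1,\Psi_0)=0$, computes $\alpha$ via exactly the Neumann-series expansion of $((\tilde{\mathcal{A}}_0+\mathcal{B})^{-1}F,\Psi_0)$ that you describe (your ``numerator of $c$'', their formula~(\ref{lem-estimate2})), identifies $\Psi_1$ through Lemma~\ref{lem-estimate1}, and then assembles $g=-(\alpha_0\alpha-\tfrac12)\,\mathrm{Cap}(D)$; your choice to pair $\Psi$ directly with $\Phi_0$ rather than first splitting off $\Psi_1$ is a cosmetic repackaging of the same four steps, and the resulting single fraction with numerator $2\tau^2v^2\mathrm{Vol}(D)\omega^2+O(\omega^3)+O(\omega\delta)$ and denominator $A(\omega,\delta)/(\alpha_0\beta_0)$ is precisely what the paper obtains after the ``$-1$'' in its Step~4 is brought over a common denominator. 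One minor remark: the $(\tau-1)\leftrightarrow(\tau+1)$ swap you track in the numerator's cubic terms is more precision than needed, since the paper simply leaves those contributions in the $O(\delta\omega)+O(\omega^3)$ remainder of~(\ref{lem-estimate2}); the damping constant $\gamma$ comes entirely from the denominator $A(\omega,\delta)$, so the theorem follows without that bookkeeping.
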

\begin{proof}
Step 1. We write $\Psi= \alpha u^{in}(y_0) \Psi_0 + \Psi_1$ with $(\Psi_1, \Psi_0)=0$. Then,
\beas
&& (\tilde{\mathcal{A}_0} - \mathcal{P}_0 + \mathcal{B}) [\alpha u^{in}(y_0)\Psi_0 + \Psi_1] =F
\eeas
implies that
\beas
&& \left(Id- (\tilde{\mathcal{A}_0}+ \mathcal{B})^{-1}\mathcal{P}_0 \right) [\alpha u^{in}(y_0) \Psi_0 + \Psi_1] = (\tilde{\mathcal{A}_0}+ \mathcal{B})^{-1}[F], 
\eeas
which yields
\beas
&& \alpha u^{in}(y_0) \Psi_0 + \Psi_1 
- \alpha u^{in}(y_0) (\tilde{\mathcal{A}_0}+ \mathcal{B})^{-1}\Phi_0 = 
(\tilde{\mathcal{A}_0}+ \mathcal{B})^{-1}[F].
\eeas
As a result, we get
\beas
\alpha u^{in}(y_0) &=& \frac{((\tilde{\mathcal{A}_0}+ \mathcal{B})^{-1}[F], \Psi_0 )}{1-\left( (\tilde{\mathcal{A}_0}+ \mathcal{B})^{-1}[\Phi_0], \Psi_0 \right)}
= - \frac{((\tilde{\mathcal{A}_0}+ \mathcal{B})^{-1}[F], \Psi_0 )} {A(\omega, \delta)}, \\
\Psi_1&=& (\tilde{\mathcal{A}_0}+ \mathcal{B})^{-1}[F] + 
\alpha u^{in}(y_0) (\tilde{\mathcal{A}_0}+ \mathcal{B})^{-1}[\Phi_0] - \alpha u^{in}(y_0) \Psi_0.
\eeas
By Lemma \ref{lem-estimate1}, we have
\[
\Psi_1 =u^{in}(y_0)
\begin{pmatrix}
\f{1}{2} \psi_0\\
-\f{1}{2} \psi_0
\end{pmatrix}+ 
O(\omega)+ O(\delta).
 \]
 
Step 2. We calculate the scattered far field. Note that
\beas
\mathcal{S}_{D}^k [\psi_0] (x) &=& \int_{\p D} G(x, y, k) \psi_0(y) d\sigma(y)
= \int_{\p D} G(x, y_0, k)(1+O(\omega) + o(1)) \psi_0(y) d\sigma(y) \\
\nm
&= & G(x, y_0, k) (\chi_{\p D}, \mathcal{S}_D^{-1}[\chi_D])(1+ O(\omega)+ o(1)) \\
\nm
&=& - Cap(D) G(x, y_0, k)(1+ O(\omega)+ o(1)).
\eeas
Therefore,
\beas
u^s(x ) &=& ( \alpha_0 \alpha u^{in}(y_0) -\f{1}{2} u^{in}(y_0) + O(\omega) + O(\delta)) 
\mathcal{S}_{D}^k (\psi_0)(x)  \\
\nm
&=&  -( \alpha_0 \alpha u^{in}(y_0) -\f{1}{2} u^{in}(y_0) + O(\omega) + O(\delta)) Cap(D) G(x, y_0, k)
(1+ O(\omega)+ o(1)) ,\\
\nm
&=& g(\omega, \delta, D) u^{in}(y_0) G(x, y_0, k) (1+ O(\omega)+  O(\delta) +o(1)),  
\eeas
where we have introduced
\be
g(\omega, \delta, D)= - (\alpha_0 \alpha -\f{1}{2} ) Cap(D). 
\ee
$g$ is called the scattering coefficient of the bubble.

Step 3. 
We prove that
\be \label{lem-estimate2}
\alpha
=\f{\left[\omega^2 \tau^2v^2 Vol(D)+ \delta Cap(D)\right]  \beta_0 + O(\delta \omega) +O(\omega^3)}{-2A(\omega, \delta)}.
\ee
Let 
$
F= F_1+ F_2 ,
$
where 
\[
F_1= 
\begin{pmatrix}
u^{in}\\
0
\end{pmatrix}
, \quad F_2= F-F_1 =
\begin{pmatrix}
0\\
\delta \f{\partial u^{in}}{\partial \nu}
\end{pmatrix}.
\]
Then 
\[
\alpha u^{in}(y_0)= - \frac{\big( (\tilde{\mathcal{A}_0}+ \mathcal{B})^{-1} [F_1], \Psi_0 \big) + \big(( 
\tilde{\mathcal{A}_0}+ \mathcal{B})^{-1}[F_2],  \Psi_0 \big)} {A(\omega, \delta)}:= - \f{I_1+I_2}{A(\omega, \delta)}.
\]
It is clear that $F_2 = O(\delta \omega)$ in $\mathcal{H}_1$, and thus
$$
I_2= ((\tilde{\mathcal{A}_0}+ \mathcal{B})^{-1}[F_2], \Psi_0 )=O(\delta \omega). 
$$
We now investigate $I_1= ((\tilde{\mathcal{A}_0}+ \mathcal{B})^{-1} [F_1], \Psi_0 )$. 
We have
\beas
I_1
&=&  \left( (Id- \tilde{\mathcal{A}_0}^{-1}\mathcal{B} + \tilde{\mathcal{A}_0}^{-1}\mathcal{B}\tilde{\mathcal{A}_0}^{-1}\mathcal{B}+...)\tilde{\mathcal{A}_0}^{-1}[F_1], \Psi_0 \right) \\
&=& (\tilde{\mathcal{A}_0}^{-1}[F_1], \Psi_0) - (\mathcal{B}\tilde{\mathcal{A}_0}^{-1}[F_1], \Phi_0) + (\mathcal{B}\tilde{\mathcal{A}_0}^{-1}\mathcal{B} \tilde{\mathcal{A}_0}^{-1}[F_1], \Phi_0) +... \\
&=& (F_1, \Phi_0)- (\tilde{\mathcal{A}_0}^{-1}[F_1], \mathcal{B}^*\Phi_0) + (\tilde{\mathcal{A}_0}^{-1}\mathcal{B} \tilde{\mathcal{A}_0}^{-1}[F_1],\mathcal{B}^*[\Phi_0]) + ... \\
&=&-(\tilde{\mathcal{A}_0}^{-1}[F_1], \mathcal{B}^*[\Phi_0]) + (\tilde{\mathcal{A}_0}^{-1}\mathcal{B} \tilde{\mathcal{A}_0}^{-1}[F_1],\mathcal{B}^*[\Phi_0]) + ...,
\eeas
where we have used the fact that $(F_1, \Phi_0)=0$ and $(\tilde{\mathcal{A}_0}^{-1})^*[\Psi_0]= \Phi_0$.

Note that
\[
\mathcal{B}^*[\Phi_0] = \omega \mathcal{A}_{1,0}^*[\Phi_0]
+ \omega^2  \mathcal{A}_{2,0}^*[\Phi_0]+ \omega^3 \mathcal{A}_{3,0}^*[\Phi_0]
+ \delta \mathcal{A}_{0,1}^*[\Phi_0]
+ O(\omega^4)+ O(\delta \omega^2). 
\]
Using the facts that
\[
\tilde{\mathcal{A}_0}^{-1}[F_1] = u^{in}(y_0)
\begin{pmatrix}
\f{1}{2} \psi_0\\
-\f{1}{2} \psi_0
\end{pmatrix}+ 
O(\omega),
\]
and 
\beas
\mathcal{A}_{1, 0}^*[\Phi_0]&=
 0, \quad
\mathcal{A}_{2, 0}^*[\Phi_0]=
\beta_0 \begin{pmatrix}
  \tau^2 v^2 \mathcal{K}_{D,2}^*[\phi_0]\\
  0
\end{pmatrix}, \\
\mathcal{A}_{3, 0}^*[\Phi_0]&=
\beta_0 \begin{pmatrix}
  \tau^3 v^3 \mathcal{K}_{D,3}^*[\phi_0]\\
  0
\end{pmatrix}, \quad
\mathcal{A}_{0, 1}^*[\Phi_0]=
-\beta_0\begin{pmatrix}
  0\\
  \chi_{\p D}
\end{pmatrix},
\eeas
we can conclude that
\beas
I_1&=& -\left(\tilde{\mathcal{A}_0}^{-1}[F_1], 
 \omega^2  \mathcal{A}_{2,0}^*[\Phi_0]+ \delta \mathcal{A}_{0,1}^* [\Phi_0] + O(\delta \omega)+ O(\omega^3) \right),\\
&=& -\f{1}{2}u^{in}(y_0) \beta_0\left[ \omega^2 (\psi_0, \tau^2 v^2 \mathcal{K}_{D,2}^*[\phi_0])
+ \delta (\psi_0, \chi_{\p D}) \right] + O(\delta \omega)+ O(\omega^3),\\
&=&\f{1}{2}\left(\omega^2 \tau^2v^2 Vol(D)+ \delta Cap(D)\right)  \beta_0 u^{in}(y_0) + O(\delta \omega) +O(\omega^3),
\eeas
which completes the proof of (\ref{lem-estimate2}).

Step 4. Recall the formula for $A(\omega, \delta)$ in the previous section and (\ref{lem-estimate2}), we have
\beas
- \f{2g(\omega, \delta, D)}{Cap(D)} =
 \f{ -\omega^2 \tau^2v^2 Vol(D)- \delta Cap(D) +O(\delta  \omega) +O(\omega^3)}
 {\tau^2 v^2 Vol(D) \omega^2 +\f{i\tau^2(\tau+1) v^3 Vol(D) Cap(D)}{8\pi} \omega^3 
 - Cap(D) \delta - \f{i(\tau-1)v Cap(D)^2}{8 \pi} \omega \delta
+ O(\omega^4) + O(\delta^2)}-1. 
 \eeas
The asymptotic behavior of $g$ in different regimes follows immediately from the above formula. This completes the proof of the theorem. 
\end{proof}


\begin{rmk}
Using the method developed above together with the results of Appendix \ref{appendixB}, we can derive  a similar monopole approximation in the far field  for a single bubble in two dimensions. 
\end{rmk}

\section{Numerical illustrations} \label{sec:numerics}
In this section we perform numerical simulations in two dimensions to analyze the resonant frequencies for two scenarios. We first analyze the single bubble case for which a formula was derived in Theorem \ref{thm-resonance-2d}. We then calculate the resonant frequencies for two bubbles and compare our results with the single bubble case.

\subsection{Resonant frequency of a single bubble in two dimensions}
To validate the Minnaert resonance formula (\ref{eq-resonance-2d-unit-circle}) in two dimensions we first determine the characteristic value $\omega_c$ of $\mathcal{A}(\omega, \delta)$ in (\ref{eq-resonance}) numerically. We then calculate the complex root $\omega_f$ of (\ref{eq-resonance-2d-unit-circle}) that has a positive real part. Comparing $\omega_c$ and $\omega_r$ over a range of appropriate values of $\delta$ allows us to judge the accuracy of the formula.

In order to perform the analysis in the correct regime, which was described in Section \ref{sec-minnaert}, we take $\rho = \kappa = 1000$ and $\rho_b = \kappa_b = c$, where $c$ is chosen such that the wave speed in both air and water is of order 1 and $\delta \in \{10^{-i}\}, \ i \in \{1,\dots, 5\}$. We use $2^9$ points to discretize the unit circle used in the calculation of the layer potentials that form $\mathcal{A}$. Calculating $\omega_c$ is equivalent to determining the smallest $\omega$ such that $\mathcal{A}(\omega, \delta)$ has a zero eigenvalue. We have 
$$
\omega_c = \min\limits_{\omega \in \C}\{\omega| \ \lambda(\omega) = 0\}\quad \lambda \in \sigma{(\mathcal{A}(\omega, \delta))},
$$
and we approach $\lambda(\omega) = 0$ as a complex root finding problem which can be calculated using Muller's method \cite{akl, cheng}.
Muller's method is applied again in order to find the root $\omega_f$ satisfying (\ref{eq-resonance-2d-unit-circle}). The resonant frequencies $\omega_c$ and $\omega_f$, along with the relative errors, for specific values of $\delta$ are given in Table \ref{table-results}. In Figure \ref{1-bubble_spec-A-vs-re-omega} it can be seen that the relative error becomes very small when $\delta \ll 1$, confirming the excellent accuracy of the formula. In particular, we note that when $\delta = 10^{-3}$, which is close to the usual contrast between water and air, the difference between $\omega_c$ and $\omega_f$ is negligible with a relative error of only $0.0652\%$.

\afterpage{
\begin{figure} 
\begin{center}
  \includegraphics[width=1\textwidth]{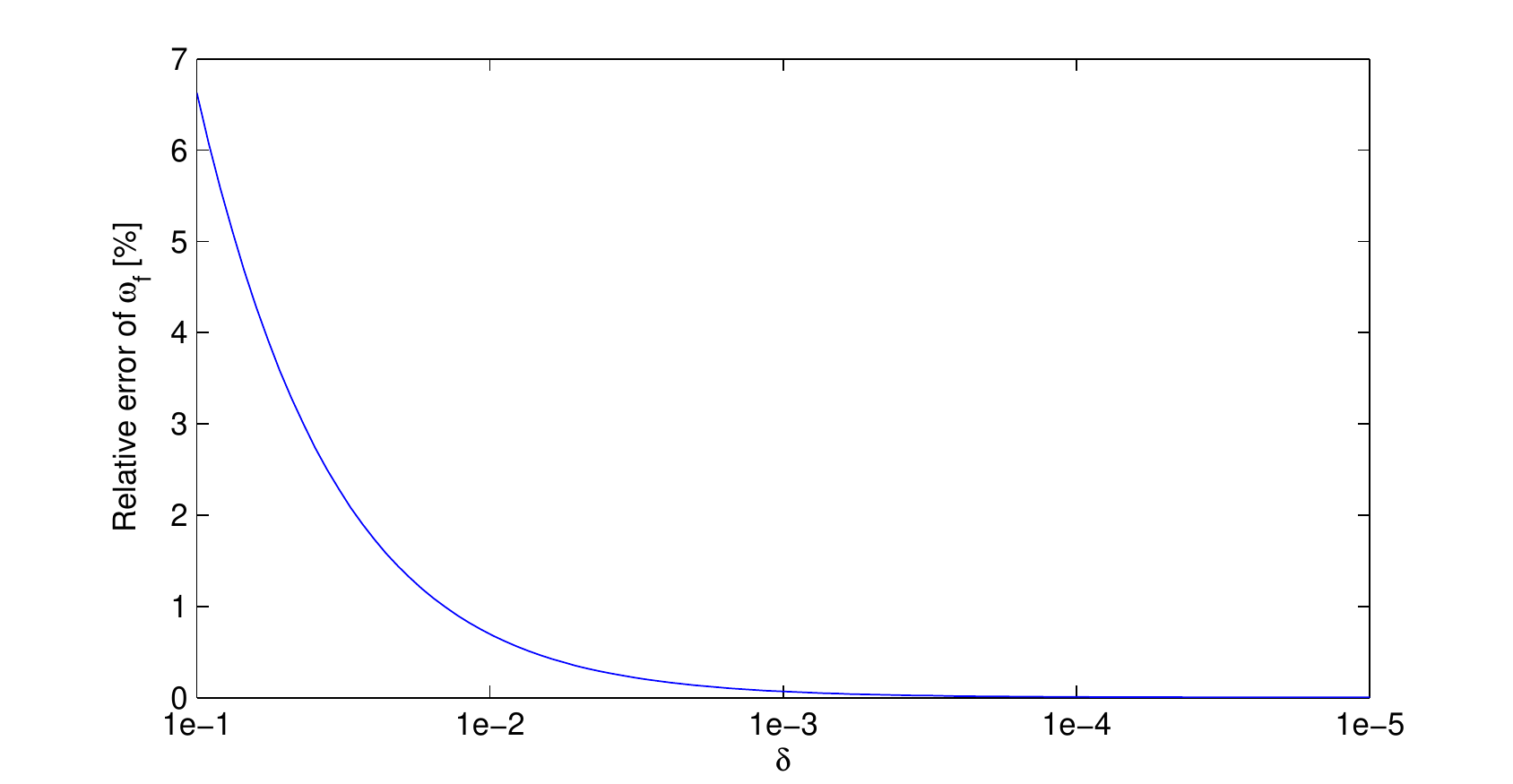}
  \caption{The relative error of the Minnaert resonance $\omega_c$ obtained by the two dimensional formula (\ref{eq-resonance-2d-unit-circle}) becomes negligible when we are in the appropriate high contrast regime.} \label{1-bubble_spec-A-vs-re-omega}
\end{center}
\end{figure}

\setlength{\tabcolsep}{12pt}
\begin{table}
    \centering
    \begin{tabular}{CCCC}
        \toprule
        \delta & \omega_c & \omega_f & \text{Relative error} \\        
        \midrule
        10^{-1} & 0.261145 - 0.150949i & 0.250455 - 0.134061i & 5.8203\% \\
        10^{-2} & 0.075146 - 0.023976i & 0.074681 - 0.023687i & 0.6727\% \\
        10^{-3} & 0.021001 - 0.004513i & 0.020987 - 0.004508i & 0.0652\% \\
        10^{-4} & 0.005950 - 0.000959i & 0.005949 - 0.000959i & 0.0062\% \\
        10^{-5} & 0.001714 - 0.000221i & 0.001714 - 0.000221i & 0.0030\% \\
        \bottomrule
    \end{tabular}
    \caption{A comparison between the characteristic value $\omega_c$ of $\mathcal{A}(\omega, \delta)$ and the root of the two dimensional resonance formula (\ref{eq-resonance-2d-unit-circle}) with positive real part $\omega_f$, over several values of $\delta$.}
    \label{table-results}
\end{table}
}

\subsection{Resonant frequencies of two bubbles in two dimensions}
In this subsection we numerically solve the two bubble case and analyze it with respect to our results for the Minnaert resonance of a single bubble. In the case of two bubbles we have two resonant frequencies, $\omega_s$ and $\omega_a$, that correspond to the normal modes of the system \cite{feuillade}. These frequencies are not in general equal to the one bubble resonant frequency $\omega_c$. The interaction between the bubbles gives rise to a shift in the resonance frequencies. The symmetric mode $\omega_s$ typically shows a downward frequency shift and occurs when the bubbles oscillate (collapse and expand) in phase, essentially opposing each other's motion. The antisymmetric mode $\omega_a$ shows an upward frequency shift and occurs when the bubbles oscillate in antiphase, facilitating each other's motion.

To account for the interaction between the two bubbles the matrix $\mathcal{A}$ in (\ref{eq-boundary}) is replaced with
\[
\mathcal{A}_2(\omega, \delta) = 
 \begin{pmatrix}
  \mathcal{S}_{D_{1}}^{k_b} &  -\mathcal{S}_{D_{1}}^{k} & 0 & -\mathcal{S}_{D_{1},D_{2}}^{k} \\
  -\f{1}{2}Id+ \mathcal{K}_{D_{1}}^{k_b, *} & -\delta( \f{1}{2}Id+ \mathcal{K}_{D_{1}}^{k, *}) & 0 & -\mathcal{K}_{D_{1},D_{2}}^{k, *} \\
  0 & -\mathcal{S}_{D_{2},D_{1}}^{k} & \mathcal{S}_{D_{2}}^{k_b} &  -\mathcal{S}_{D_{2}}^{k} \\
  0 & -\mathcal{K}_{D_{2},D_{1}}^{k, *} & -\f{1}{2}Id+ \mathcal{K}_{D_{2}}^{k_b, *}& -\delta( \f{1}{2}Id+ \mathcal{K}_{D_{2}}^{k, *}) \\
\end{pmatrix},
\]
where the operators $\mathcal{S}_{D_{ij}}^{k}$ and $\mathcal{K}_{D_{ij}}^{k_b, *}$ are given by
$$
\mathcal{S}_{D_{i},D_{j}}^{k} =  \int_{\p D_{j}} G(x, y, k) \psi(y) d\sigma(y),  \quad x \in  \p {D_{i}},
$$
and
$$
\mathcal{K}_{D_{i},D_{j}}^{k, *} [\psi](x)  = \int_{\p D_{j} } \f{\p G(x, y, k)}{ \p \nu(x)} \psi(y) d\sigma(y) ,  \quad x \in \p D_{i}.
$$

The variation in the eigenvalues of $\mathcal{A}_2$ with respect to the input frequency, and hence the shifting of the resonant frequencies, is highly sensitive to the ratio of $\delta = \rho_b/\rho$ to $\kappa_b/\kappa$, with it being at a minimum when these quantities are equal. In order to make the results more clearly visible, while keeping the simulation in the correct regime, let us take $\rho_b = 1.1$ and $\kappa_b = 0.1$. For reference, we note that the resonant frequency for a single bubble in this regime is $\omega_c = 0.01856427 - 0.00387243i$.

\begin{figure} [!ht]
\begin{center}
  \includegraphics[width=1\textwidth]{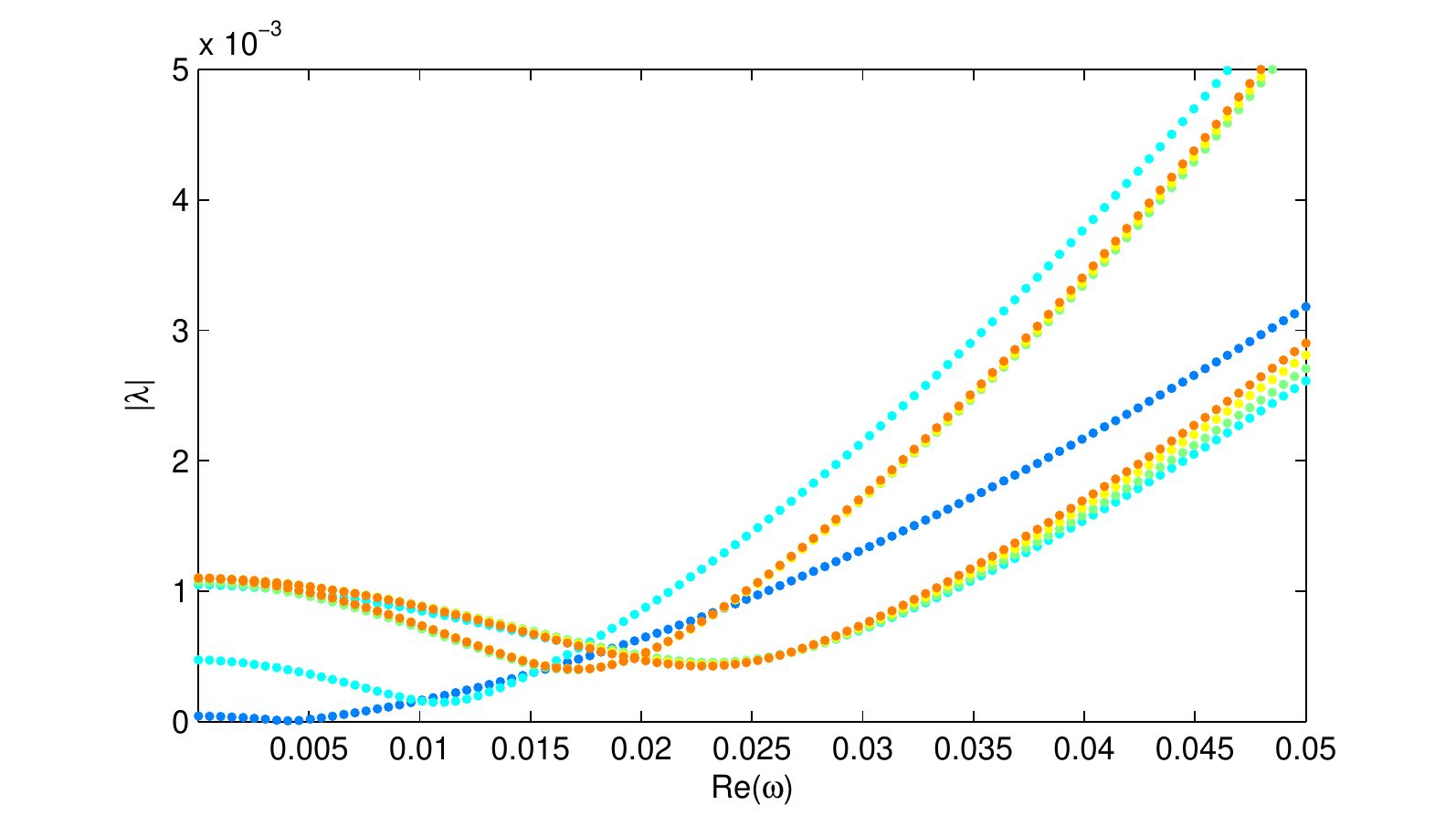}
  \caption{When the bubbles are close together the resonance may be much more pronounced. Here we have $|\lambda|$ as the distance varies from $d = 0.1$ (blue dots) to $d = 0.5$ (orange dots) and $\Im(\omega) = -0.0008i$. We have resonance at the symmetric mode $\omega_s \approx 0.0041 - 0.0008i$ when $d = 0.1$. The resonant frequency of a single bubble is $\omega_c = 0.01856427 - 0.00387243i$.}
  \label{fig-bubble-regime-close}
\end{center}
\end{figure}

We now identify three regimes in terms of bubble separation distance $d$. The first occurs due to strong interaction when $d$ is less than the radius of the bubbles. In this regime the resonant frequency shift may be much more pronounced. For example, when $d = 0.1$ we have $\omega_s \approx 0.0041 - 0.0008i$, while $\omega_a \approx 0.7435 + 0.0032i$. This regime is shown in Figure \ref{fig-bubble-regime-close} for $\Im(\omega) = - 0.008i$.

When $d$ is greater than the radius of the bubbles, yet not very large, we have a somewhat stable regime featuring small to moderate resonant frequency shifts. It is natural to expect that as the distance between the bubbles increases, the eigenvalues of the two bubble system approach those of the single bubble system. And indeed that is the case as can be seen in Figure \ref{fig-bubble-regime-mid-real} where $\omega$ has been restricted to $\R$. 

As with the three dimensional case, however, we require a complex $\omega$ with negative imaginary part in order for $\mathcal{A}$ or $\mathcal{A}_2$ to become singular. This can be seen in Figure \ref{fig-bubble-regime-mid-complex} for $d = 10$ and $d = 100$. Table \ref{table-2-bubble-res-freqs} shows that the normal modes are quite close to the single bubble resonant frequency in this regime.

The final regime occurs when the separation distance becomes very large compared to  the radius of the bubbles. In this situation the sensitivity of the Hankel function in the layer potentials to negative imaginary numbers becomes apparent, leading to a much wider variation in the eigenvalues of $\mathcal{A}_2$. Similarly to when the bubbles are very close together, we observe significant resonant frequency shifts in this regime. When $d$ varies from $100$ to $1000$ we obtain the spectrum shown in Figure \ref{fig-bubble-regime-far}. Here we have a symmetric mode $\omega_s \approx 0.0013 - 0.00577i$ and an antisymmetric mode $\omega_a \approx 0.0308 - 0.00575i$.

\begin{figure}
  \centering
  \vertfig
    [The distance between the bubbles is varying from $0.1$ to $1$.]
    {\includegraphics[width=1.0\textwidth]{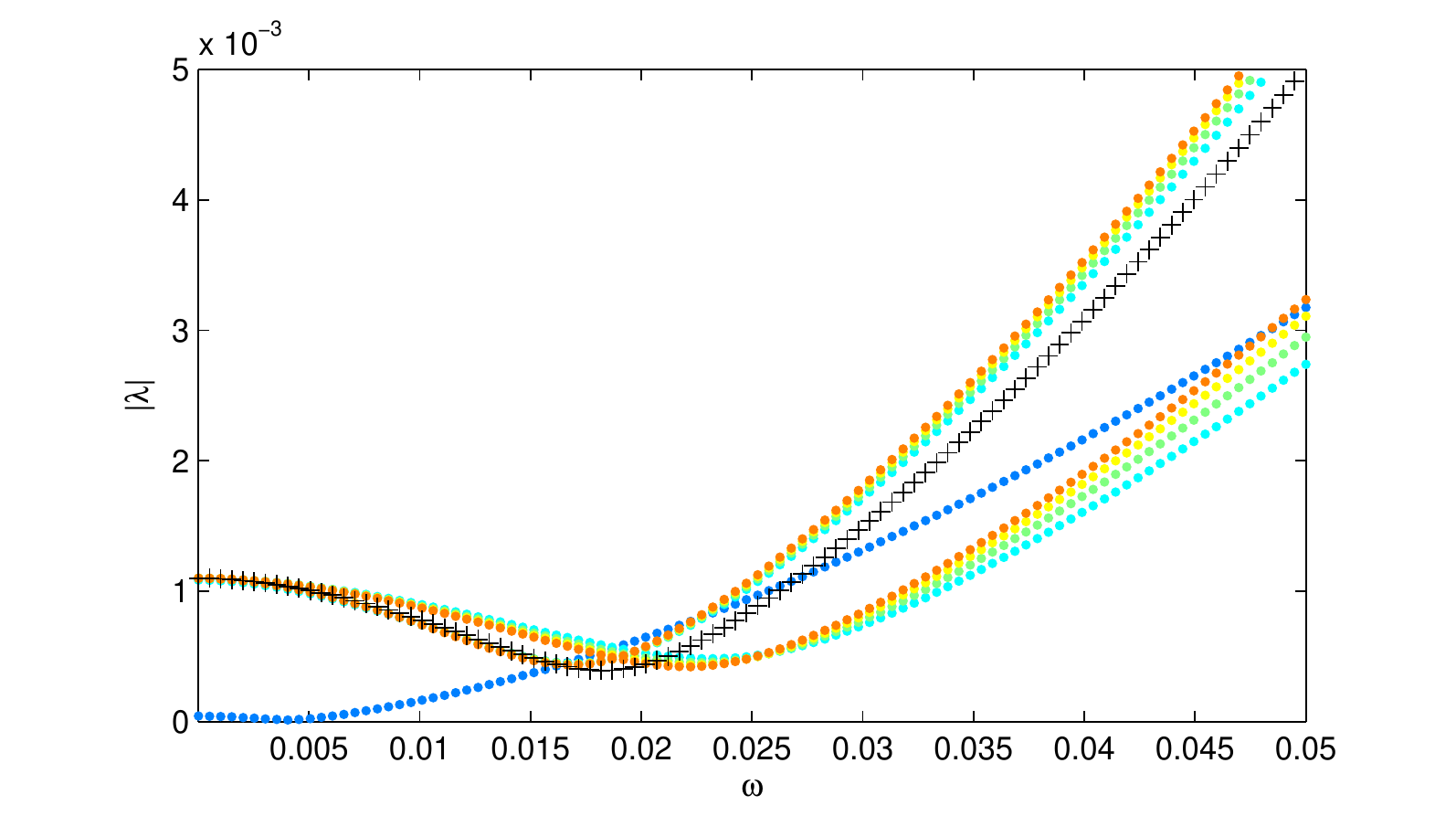}
    \label{fig-2-bubbles-dist-01-1}}
  \vertfig
    [The distance between the bubbles is varying from $10$ to $100$.]
    {\includegraphics[width=1.0\textwidth]{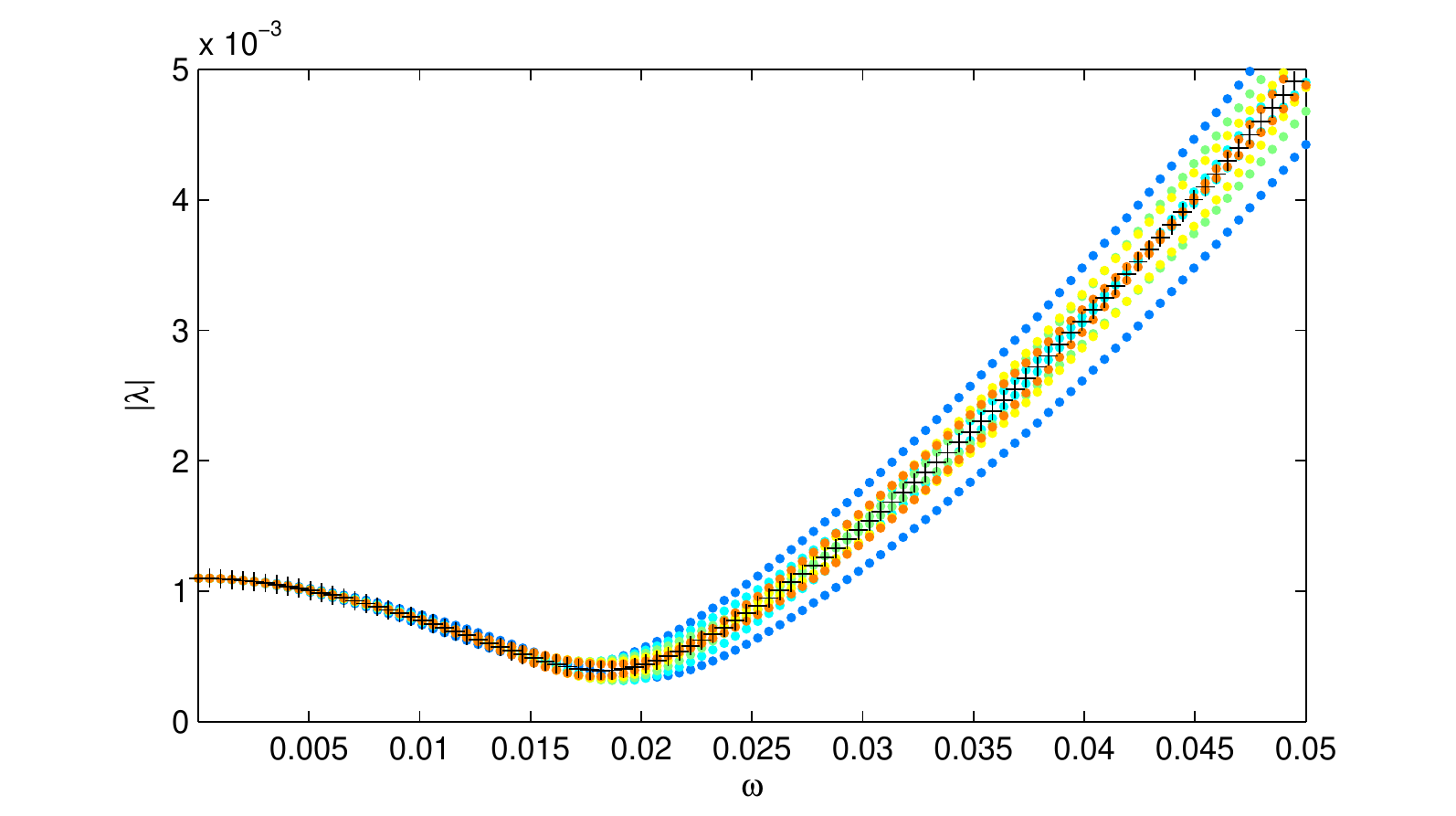}
    \label{fig-2-bubbles-dist-10-100}}
  \caption{$|\lambda|$ when $\omega \in \R$ for $\lambda \in \sigma(\mathcal{A})$ (black crosses) and $\lambda \in \sigma(\mathcal{A}_2)$ (colored dots) . The distance increases as the dots change from blue to orange. Although the eigenvalues of $\mathcal{A}_2$ approach those of $\mathcal{A}$ as the distance increases, they don't go to zero when $\omega$ is real. Here, $\sigma(\mathcal{A})$ and $\mathcal{A}_2$  are the spectra of $\mathcal{A}$ and $\sigma(\mathcal{A}_2)$, respectively. } \label{fig-bubble-regime-mid-real}
\end{figure}

\begin{figure}
  \centering
  \vertfig
    [The distance between the bubbles is 10.]
    {\includegraphics[width=1.0\textwidth]{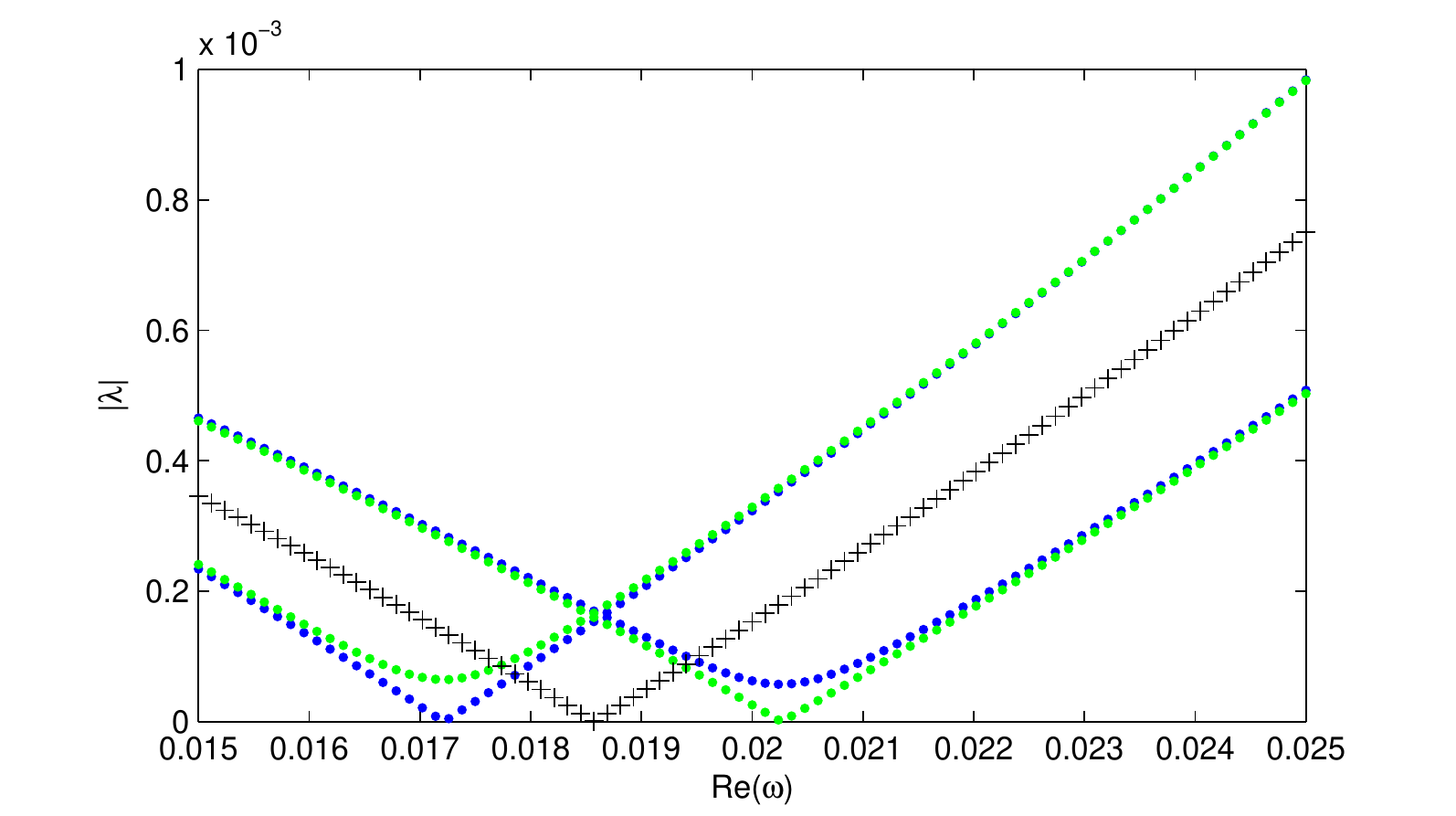}\label{fig-bubble-regime-mid-complex-10}}
  \vertfig
    [The distance between the bubbles is 100.]
    {\includegraphics[width=1.0\textwidth]{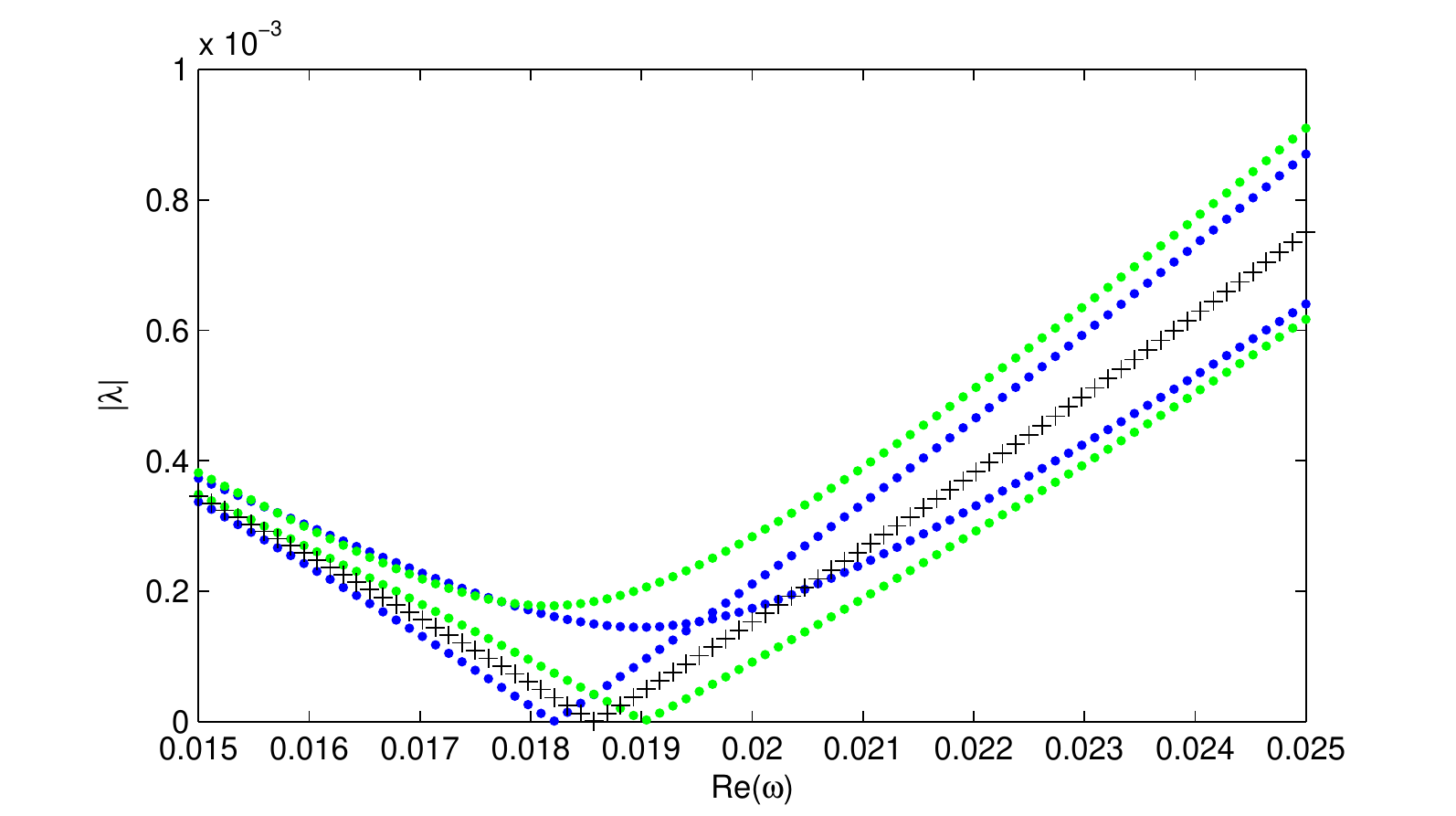}\label{fig-bubble-regime-mid-complex-100}}
  \caption{The eigenvalues of $\mathcal{A}$ (black crosses) and $\mathcal{A}_2$ (blue and green dots) may go to zero in the regime where the bubbles are a moderate distance apart, provided $\omega$ has some negative imaginary part. The frequency shift is less pronounced when $d = 100$ as opposed to $d = 10$ due to the decrease in the interaction of the bubbles with each other.}
  \label{fig-bubble-regime-mid-complex}
\end{figure}

\setlength{\tabcolsep}{12pt}
\begin{table}
    \centering
    \begin{tabular}{CCCC}
        \toprule
        & d = 10 & d = 100  \\        
        \midrule
        \omega_s & 0.01722793 -0.00407516i & 0.01819212 -0.00316674i  \\
        \omega_a & 0.02025476 -0.00349214i & 0.01905723 -0.00470526i  \\
        \bottomrule
    \end{tabular}
    \caption{The normal modes of the two bubble system shown in Figure \ref{fig-bubble-regime-mid-complex}. They are quite close to the resonant frequency of a single bubble in this regime, in contrast to the strong frequency shifts observed when $d \ll a$ and $d \gg a$.}
    \label{table-2-bubble-res-freqs}
\end{table}

 \begin{figure}
  \centering
\includegraphics[width=1.0\textwidth]{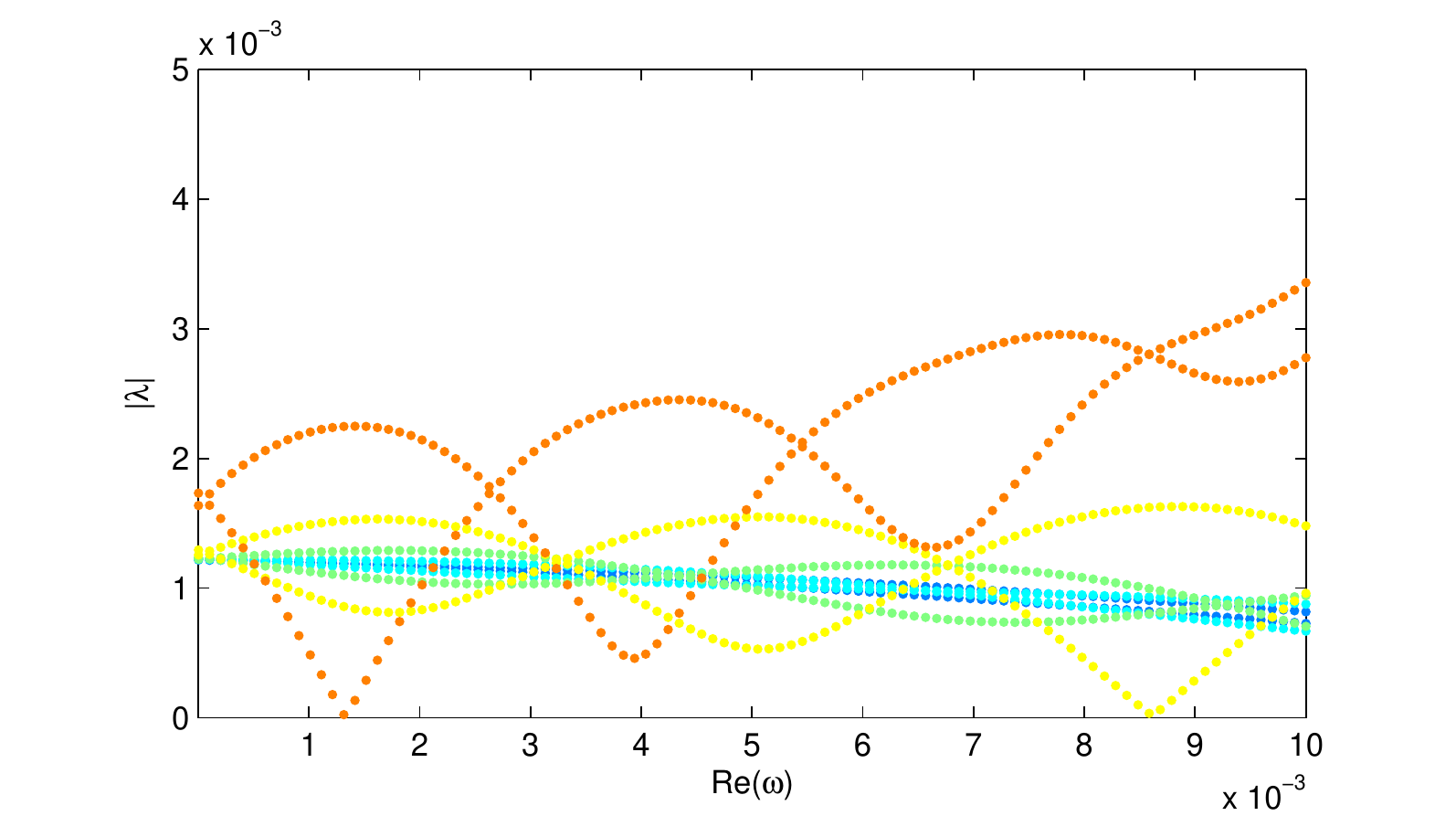}
  \caption{The sensitivity of the Hankel function in the layer potentials to negative imaginary numbers is apparent when the distance between the bubbles is very large. This leads to a signification reduction in the real part of the resonant frequencies. Here $d$ varies from $100$ to $1000$ and $\Im(\omega) = -0.00577i$. We have a symmetric mode at $\omega_s \approx 0.0013 - 0.00577i$.}
  \label{fig-bubble-regime-far}
\end{figure}

\section{Concluding remarks}
In this paper we have investigated the acoustic wave propagation problem in bubbly media and for the first time rigorously derived the low frequency resonances. Furthermore, we have provided a justification for the monopole approximation. The techniques developed in this paper open a door for a mathematical and numerical framework for investigating acoustic wave propagation in bubbly media. In forthcoming papers we will investigate the superabsorption effect that can be achieved using bubble metascreens \cite{leroy2, leroy3}. We will also mathematically justify 
Foldy's approximation and quantify time-reversal and the superfocusing effect  in bubbly media probed at their Minnaert resonant frequency \cite{lanoy}. Finally, we will develop accurate and fast numerical schemes for solving acoustic wave propagation problems in the presence of   closely spaced bubbles. 

\appendix
\section{Some asymptotic expansions} \label{sec-appendix-3d}

We recall some basic asymptotic expansion for the layer potentials in three and two dimensions from \cite{akl}; see also the appendix in \cite{matias}. 

\subsection{Some asymptotic expansions in three dimensions}
We first consider the single layer potential:
$$
\mathcal{S}_{D}^{k} [\psi](x) =  \int_{\p D} G(x, y, k) \psi(y) d\sigma(y),  \quad x \in  \p {D},
$$
where $$G(x, y, k)= - \f{e^{ik|x-y|}}{4 \pi|x-y|}.$$
We have the following asymptotic expansion:
\be \label{series-s}
\mathcal{S}_{D}^{k}=  \mathcal{S}_{D} + \sum_{j=1}^{\infty} k^j \mathcal{S}_{D, j},
\ee
where
$$
\mathcal{S}_{D, j} [\psi](x) = - \f{i}{4 \pi} \int_{\p D} \f{ (i|x-y|)^{j-1}}{j! } \psi(y)d\sigma(y).
$$
In particular, we have
\bea
\mathcal{S}_{D} [\psi](x) &=& -   \int_{\p D} \f{1}{4 \pi|x-y|} \psi(y)d\sigma(y), \\
\mathcal{S}_{D, 1} [\psi](x) &=& - \f{i}{4 \pi}  \int_{\p D}  \psi(y)d\sigma(y), \\
\mathcal{S}_{D, 2} [\psi](x) &=& - \f{1}{8 \pi}  \int_{\p D}  |x-y| \psi(y)d\sigma(y).
\eea

\begin{lem} \label{lem-appendix11} The norm
$\| \mathcal{S}_{D, j} \|_{
\mathcal{L}(L^2(\p D), H^1(\p D))}$ is uniformly bounded with respect to $j$. Moreover, the series in (\ref{series-s}) is convergent in 
$\mathcal{L}(L^2(\p D), H^1(\p D))$.
\end{lem}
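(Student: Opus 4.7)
The plan is to prove both claims by direct estimation of the kernel $K_j(x,y) := (i|x-y|)^{j-1}/j!$, treating $j=0,1$ separately from $j\geq 2$. Let $L := \mathrm{diam}(\partial D)$ and write $\|\cdot\|_{op}$ for the operator norm from $L^2(\partial D)$ to $H^1(\partial D)$.

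First, I would dispose of the two boundary cases. For $j=0$, the operator $\mathcal{S}_D$ is the classical single layer potential on $\partial D\in C^{1,s}$, which is known to be bounded (in fact an isomorphism) from $L^2(\partial D)$ to $H^1(\partial D)$ by the result cited as \cite{book2}. For $j=1$, the kernel $-i/(4\pi)$ is constant in $(x,y)$, so $\mathcal{S}_{D,1}[\psi]$ is a constant function with $|\mathcal{S}_{D,1}[\psi](x)|\leq |\partial D|^{1/2}\|\psi\|_{L^2(\partial D)}/(4\pi)$; its tangential gradient vanishes, giving an explicit $H^1$ bound.

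For $j\geq 2$, the kernel $K_j$ is smooth, and by direct estimation
\[
|K_j(x,y)| \leq \frac{L^{j-1}}{j!}, \qquad |\nabla_{\!x}K_j(x,y)| \leq \frac{L^{j-2}}{(j-1)!},
\]
the latter being valid (in the tangential sense on $\partial D$) because differentiation pulls down one power and a factor of $j-1$. Applying Cauchy--Schwarz in the $y$-integral and integrating $|x|$ on $\partial D$ shows
\[
\|\mathcal{S}_{D,j}\|_{op} \leq C\,|\partial D|\,\frac{L^{j-2}}{(j-1)!},
\]
where $C$ depends only on $\partial D$ (through $|\partial D|$ and the $C^{1,s}$ regularity used to identify the tangential gradient with a component of $\nabla_x$). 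Since this bound tends to $0$ as $j\to\infty$, combined with the two special cases we obtain the uniform bound on $\|\mathcal{S}_{D,j}\|_{op}$ with respect to $j$.

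For the second assertion, term-by-term bounding gives
\[
\sum_{j=2}^{\infty}|k|^{j}\,\|\mathcal{S}_{D,j}\|_{op} \leq C|\partial D|\,\frac{|k|^2}{L}\sum_{j=2}^{\infty}\frac{(|k|L)^{j-2}}{(j-1)!} \leq C'\,|k|^2\,e^{|k|L},
\]
so the tail series converges absolutely in operator norm for every $k\in\mathbb{C}$; adding the two bounded contributions from $j=0,1$ yields convergence of \eqref{series-s} in $\mathcal{L}(L^2(\partial D),H^1(\partial D))$. The main obstacle I anticipate is the kernel-gradient estimate on the curved surface $\partial D$: one must justify passage of the tangential derivative under the integral and control the constant $C$ uniformly in $j$, which relies on the $C^{1,s}$ regularity of $\partial D$ to locally straighten the boundary and bound the Jacobian, exactly as in the asymptotic expansion framework of \cite{akl}.
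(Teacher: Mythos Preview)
The paper does not prove this lemma; it is simply stated in the appendix as a fact recalled from \cite{akl} (and \cite{matias}), with no argument given. Your direct kernel-estimate proof is correct and is essentially the standard one: the pointwise bounds $|K_j|\le L^{j-1}/j!$ and $|\nabla_x K_j|\le L^{j-2}/(j-1)!$ for $j\ge 2$, together with Cauchy--Schwarz over $\p D$, yield the uniform operator bound, and the factorial decay then gives absolute convergence of the series for every $k$. One small remark: the first claim of the lemma concerns only the $\mathcal{S}_{D,j}$ with $j\ge 1$ (the series in \eqref{series-s} begins at $j=1$), so your treatment of $\mathcal{S}_D$ is needed only for the second claim, not for the uniform-bound statement itself; otherwise your case split and the acknowledged technical point about passing the tangential derivative under the integral (handled via the $C^{1,s}$ regularity, as in \cite{akl}) are exactly right.
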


We now consider the boundary integral operator $\mathcal{K}_{D}^{k, *}$ defined by
$$
\mathcal{K}_{D}^{k, *} [\psi](x)  = \int_{\p D } \f{\p G(x, y, k)}{ \p \nu(x)} \psi(y) d\sigma(y) ,  \quad x \in \p D.
$$
 We have
\be \label{series-k}
\mathcal{K}_{D}^{k,*}  = \mathcal{K}_D^* + k \mathcal{K}_{D, 1} + k^2 \mathcal{K}_{D, 2} + \ldots,
\ee
where
$$
\mathcal{K}_{D, j}[\psi](x) = - \f{i}{4 \pi} \int_{\p D} \f{ \p (i|x-y|)^{j-1}}{j! \p \nu(x)} \psi(y) d\sigma(y)=
- \f{i^j (j-1)}{4 \pi j!} \int_{\p D} |x-y|^{j-3} (x-y)\cdot\nu(x) \psi(y) d\sigma(y).
$$
In particular, we have
\beas
\mathcal{K}_{D, 1} &=&0,\\
\mathcal{K}_{D, 2}[\psi](x) &=& \f{1}{8\pi} \int_{\p D} \f{(x-y)\cdot \nu(x)}{|x-y|} \psi(y)d\sigma(y),\\
\mathcal{K}_{D, 3}[\psi](x) &=& \f{i}{12\pi} \int_{\p D} (x-y)\cdot \nu(x)\psi(y)d\sigma(y).
\eeas

\begin{lem} \label{lem-appendix13} The norm
$\| \mathcal{K}_{D, j} \|_{ \mathcal{L}(L^2(\p D))}$ is uniformly bounded for $j \geq 1$.
Moreover, the series in (\ref{series-k}) is convergent in $\mathcal{L}(L^2(\p D))$.
\end{lem}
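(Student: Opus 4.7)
The plan is to exploit the explicit formula for the kernel of $\mathcal{K}_{D,j}$, show that its absolute value is dominated by $\frac{(j-1)}{j!}\mathrm{diam}(D)^{j-2}$ up to a constant, and then apply Schur's test to obtain an operator-norm bound with factorial decay in $j$. This will yield both the uniform boundedness and the absolute convergence of the series.

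First I would record the explicit kernel
$$K_j(x,y) = -\frac{i^j(j-1)}{4\pi j!}\,|x-y|^{j-3}(x-y)\cdot\nu(x),$$
note that $\mathcal{K}_{D,1}=0$ so only $j\geq 2$ matters, and then split into two cases. For $j\geq 3$ the trivial estimates $|(x-y)\cdot\nu(x)|\leq|x-y|$ and $|x-y|\leq\mathrm{diam}(D)$ give the pointwise bound $|K_j(x,y)|\leq\frac{(j-1)}{4\pi j!}\mathrm{diam}(D)^{j-2}$. The borderline case is $j=2$, where the kernel is $\frac{(x-y)\cdot\nu(x)}{8\pi|x-y|}$; this is not pointwise bounded by the crude estimate, and here I would invoke the standard cancellation on a $C^{1,s}$ surface, namely $|(x-y)\cdot\nu(x)|\leq C|x-y|^{1+s}$ for $x,y\in\partial D$, to deduce that $K_2$ is a bounded (in fact $s$-H\"older) function on $\partial D\times\partial D$.

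With the kernel in hand, I would invoke Schur's test together with the finiteness of $\sigma(\partial D)$: bounding both $\sup_x\int|K_j(x,y)|\,d\sigma(y)$ and $\sup_y\int|K_j(x,y)|\,d\sigma(x)$ (the $C^{1,s}$ estimate works for either variable when $j=2$, since it concerns the normal $\nu(x)$ but is applied pointwise), one obtains
$$\|\mathcal{K}_{D,j}\|_{\mathcal{L}(L^2(\partial D))} \,\leq\, C_D\,\frac{(j-1)\,\mathrm{diam}(D)^{j-2}}{4\pi j!}\qquad (j\geq 2),$$
where $C_D$ depends only on $\sigma(\partial D)$ and the $C^{1,s}$-constant of $\partial D$. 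This right-hand side is obviously bounded uniformly in $j$, proving the first claim. For the second claim, summing the resulting factorial series gives
$$\sum_{j=1}^{\infty}|k|^{j}\|\mathcal{K}_{D,j}\|_{\mathcal{L}(L^2(\partial D))} \,\leq\, C_D\sum_{j=2}^{\infty}\frac{(j-1)\,|k|^{j}\,\mathrm{diam}(D)^{j-2}}{4\pi j!}<\infty$$
for every $k\in\mathbb{C}$, so the series defining $\mathcal{K}_D^{k,*}$ converges absolutely in $\mathcal{L}(L^2(\partial D))$, and uniformly on compact sets in $k$, making $\mathcal{K}_D^{k,*}$ an entire operator-valued function of $k$.

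The only delicate step is the $j=2$ term, where the naive kernel estimate fails at the diagonal $x=y$ and one must invoke the $C^{1,s}$ regularity of $\partial D$ to recover boundedness; all higher-order terms are routine, being controlled by diameter bounds together with the factorial decay in the coefficient.
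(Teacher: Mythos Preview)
The paper does not actually prove this lemma; it is stated in the appendix as a result recalled from the reference \cite{akl}, with no argument supplied. Your proof is correct and is essentially the standard one: the factorial coefficient $1/j!$ combined with the diameter bound $|x-y|^{j-2}\leq \mathrm{diam}(D)^{j-2}$ dispatches all $j\geq 3$ by a crude kernel estimate, and for the borderline $j=2$ the $C^{1,s}$ cancellation $|(x-y)\cdot\nu(x)|\leq C|x-y|^{1+s}$ is precisely the ingredient that makes $K_2$ bounded on $\partial D\times\partial D$. Schur's test then converts the pointwise kernel bound into an $L^2$ operator bound, and the factorial decay gives absolute convergence of the series for every $k$.

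One small remark on exposition: your parenthetical ``the $C^{1,s}$ estimate works for either variable'' is a bit muddled, since the estimate is asymmetric in $x$ and $y$ (it involves $\nu(x)$ only). What you actually need, and what the estimate delivers, is simply that $K_2\in L^\infty(\partial D\times\partial D)$; once that is established, both Schur integrals are bounded by $\sigma(\partial D)\cdot\|K_2\|_{L^\infty}$ and the asymmetry is irrelevant. You might state it that way to avoid confusion.
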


\begin{lem}
The following identities hold: 
\begin{enumerate}
\item[(i)]
$$
\mathcal{K}_{D, 2}^*[\chi_{\p D}] (x)= \f{1}{8\pi}\int_{\p D} \f{(y-x)\cdot \nu(y)}{|y-x|}d\sigma(y)
= \f{1}{8\pi}\int_{D} \nabla \cdot \f{y-x}{|y-x|}dy =  \f{1}{4\pi}\int_{D} \f{1}{|y-x|}dy. 
$$
\item[(ii)]
$$
\mathcal{K}_{D, 3}^*[\chi_{\p D}](x) = \f{-i}{12 \pi}\int_{\p D} (y-x)\cdot{\nu(y)}d\sigma(y)
= \f{-i}{12 \pi}\int_{D} \nabla \cdot (y-x) dy = \f{-i}{12 \pi} 3 Vol (D) = \f{-i}{4\pi} Vol (D).
$$
\end{enumerate}
\end{lem}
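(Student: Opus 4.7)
The plan is to reduce each identity to a direct application of the divergence theorem, after first writing out the adjoint operators explicitly via the kernels of $\mathcal{K}_{D,2}$ and $\mathcal{K}_{D,3}$ given in the appendix. I would treat (ii) first because it is cleaner, then (i), where a mild singularity must be dealt with.

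For (ii), the adjoint kernel of $\mathcal{K}_{D,3}$ is obtained by swapping the roles of $x$ and $y$ in its kernel and taking the complex conjugate, which yields
\[
\mathcal{K}_{D,3}^*[\chi_{\p D}](x) \;=\; \f{-i}{12\pi}\int_{\p D}(y-x)\cdot\nu(y)\,d\sigma(y).
\]
The vector field $y\mapsto y-x$ is smooth on $\overline{D}$, so the divergence theorem applies directly and gives $\int_D \nabla_y\cdot(y-x)\,dy = 3\,\mathrm{Vol}(D)$. Substituting back and simplifying produces $-i\,\mathrm{Vol}(D)/(4\pi)$, as claimed.

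For (i), the same adjoint step gives
\[
\mathcal{K}_{D,2}^*[\chi_{\p D}](x) \;=\; \f{1}{8\pi}\int_{\p D}\f{(y-x)\cdot\nu(y)}{|y-x|}\,d\sigma(y),
\]
and I would recognise the integrand as the outward normal trace of the vector field $F(y)=(y-x)/|y-x|$. Applying the divergence theorem converts the surface integral to $(8\pi)^{-1}\int_D\nabla_y\cdot F\,dy$. A direct computation in $\R^3$ yields $\nabla_y\cdot F(y) = 2/|y-x|$ (since $\nabla_y\cdot(y-x)=3$ and $(y-x)\cdot\nabla_y|y-x|^{-1}=-|y-x|^{-1}$), which closes the chain and produces $(4\pi)^{-1}\int_D|y-x|^{-1}\,dy$.

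The only delicate point is the use of the divergence theorem in (i) when $x\in\p D$, since both $F$ and $\nabla\cdot F$ are singular at $y=x$. I would handle this by excision: apply the divergence theorem on $D\setminus B_\varepsilon(x)$ and send $\varepsilon\to 0^+$. The extraneous boundary term on $\p B_\varepsilon(x)\cap D$ vanishes in the limit because $|F|\le 1$ and the surface area of the spherical cap is $O(\varepsilon^2)$, while the volume integrand $|y-x|^{-1}$ is absolutely integrable in three dimensions, so the right-hand side converges. This shows that all three expressions in (i) are finite and equal for every $x\in\p D$, completing the proof.
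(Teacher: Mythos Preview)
Your proposal is correct and follows exactly the approach implicit in the paper: the paper's ``proof'' is nothing more than the chain of equalities displayed in the lemma itself (write out the adjoint kernel, apply the divergence theorem, compute the divergence), and you reproduce this faithfully. Your excision argument for the mild singularity in (i) is a welcome addition of rigor that the paper omits.
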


\subsection{Some asymptotic expansions in two dimensions} \label{appendix-2d-2}

In two dimensions, the single-layer potential for the Helmholtz equation is defined by
$$
\mathcal{S}_{D}^{k} [\psi](x) =  \int_{\p D} G(x, y, k) \psi(y) d\sigma(y),  \quad x \in  \p {D},
$$
where $G(x, y, k)= -\dfrac{i}{4}H_0^{(1)}(k|x-y|)$ and $H_0^{(1)}$ is the Hankel function of first kind and order $0$. We have
$$
-\dfrac{i}{4}H_0^{(1)}(k|x-y|) = \dfrac{1}{2\pi}\ln |x-y|+\eta_k+\sum_{j=1}^{\infty}(b_j\ln k|x-y|+c_j)(k|x-y|)^{2j},
$$
where
$$
\eta_k = \dfrac{1}{2\pi}(\ln k+\gamma-\ln 2)-\dfrac{i}{4}, \quad b_j = \dfrac{(-1)^j}{2\pi}\dfrac{1}{2^{2j}(j!)^2}, \quad c_j = b_j\left(\gamma -\ln 2-\dfrac{i\pi}{2}-\sum_{n=1}^j\dfrac{1}{n}\right),
$$
and
$\gamma$ is the Euler constant. Especially, 
\[
b_1= -\f{1}{8 \pi}, \,\, c_1 = -\f{1}{8 \pi} (\gamma- \ln 2-1 -\f{i \pi}{2}).
\]
Thus, 
\be \label{series-s2d}
\mathcal{S}_{D}^{k}=  \hat{\mathcal{S}}_{D}^k +\sum_{j=1}^{\infty} \left(k^{2j}\ln k\right) \mathcal{S}_{D, j}^{(1)}+\sum_{j=1}^{\infty} k^{2j} \mathcal{S}_{D, j}^{(2)},
\ee
where
\be \label{hatcals}
\hat{\mathcal{S}}_{D}^k[\psi](x) = \mathcal{S}_{D}[\psi](x) + \eta_k\int_{\partial D} \psi \, d\sigma,\ee
and 
\beas
\mathcal{S}_{D, j}^{(1)} [\psi](x) &=& \int_{\p D} b_j|x-y|^{2j} \psi(y)d\sigma(y),\\
\mathcal{S}_{D, j}^{(2)} [\psi](x) &=& \int_{\p D} |x-y|^{2j}(b_j\ln|x-y|+c_j)\psi(y)d\sigma(y).
\eeas

We next consider the boundary integral operator 
$\mathcal{K}_{D}^{k, *}$ defined by
\[
\mathcal{K}_{D}^{k, *} [\psi](x) =\int_{\p D } \f{\p G(x, y, k)}{ \p \nu(x)} \psi(y) d\sigma(y) ,  \quad x \in \p D.
 \] 
 We have
\be \label{series-k2d}
\mathcal{K}_{D}^{k,*} = \mathcal{K}_D^* +\sum_{j=1}^{\infty} \left(k^{2j}\ln k\right) \mathcal{K}_{D, j}^{(1)}+\sum_{j=1}^{\infty} k^{2j} \mathcal{K}_{D, j}^{(2)},
\ee
where
\beas
\mathcal{K}_{D, j}^{(1)} [\psi](x) &=& \int_{\p D} b_j\dfrac{\partial |x-y|^{2j}}{\partial \nu(x)}\psi(y)d\sigma(y),\\
\mathcal{K}_{D, j}^{(2)} [\psi](x) &=& \int_{\p D} \dfrac{\partial \left( |x-y|^{2j}(b_j\ln|x-y|+c_j)\right)}{\nu(x)}\psi(y)d\sigma(y).
\eeas

\begin{lem}
The following estimates hold in $\mathcal{L}(L^2(\p D), H^1(\p D))$ and $\mathcal{L}(L^2(\p D), L^2(\p D))$, respectively: 
\beas
\mathcal{S}_{D}^{k}&=&  \hat{\mathcal{S}}_{D}^k +k^{2}\ln k \mathcal{S}_{D, 1}^{(1)}+k^{2} \mathcal{S}_{D, 1}^{(2)} + O(k^4 \ln k);\\
\mathcal{K}_{D}^{k,*}&=& \mathcal{K}_{D} +k^{2}\ln k \mathcal{K}_{D, 1}^{(1)}+k^{2} \mathcal{K}_{D, 1}^{(2)} + O(k^4 \ln k).
\eeas
\end{lem}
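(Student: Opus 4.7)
The plan is to work directly from the series representations (\ref{series-s2d}) and (\ref{series-k2d}), which have already been established by substituting the power series for $H_0^{(1)}(k|x-y|)$ into the definitions of $\mathcal{S}_D^k$ and $\mathcal{K}_D^{k,*}$ and integrating term by term. The task reduces to isolating the $j=1$ contributions as the explicit asymptotic part and controlling the tails $\sum_{j\geq 2}$ uniformly in the operator norms $\mathcal{L}(L^2(\partial D), H^1(\partial D))$ and $\mathcal{L}(L^2(\partial D), L^2(\partial D))$.

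First I would establish uniform operator bounds on the individual terms $\mathcal{S}_{D,j}^{(1)}$, $\mathcal{S}_{D,j}^{(2)}$, $\mathcal{K}_{D,j}^{(1)}$, $\mathcal{K}_{D,j}^{(2)}$. The coefficients satisfy $|b_j| \leq \frac{1}{2\pi \, 4^j (j!)^2}$ and $|c_j| \leq |b_j|\bigl(C_0 + \log j\bigr)$, while the kernels involve $|x-y|^{2j}$ and $|x-y|^{2j}\ln|x-y|$, which are pointwise bounded by $(\mathrm{diam}\,D)^{2j}$ up to an integrable logarithmic factor. Since the boundary $\partial D$ is $C^{1,s}$, a standard Cauchy–Schwarz / kernel estimate then yields
\[
\|\mathcal{S}_{D,j}^{(1)}\|_{\mathcal{L}(L^2,H^1)} + \|\mathcal{S}_{D,j}^{(2)}\|_{\mathcal{L}(L^2,H^1)} \;\leq\; \frac{C\,(\mathrm{diam}\,D)^{2j}\,(1+\log j)}{4^j (j!)^2},
\]
and the analogous bound (with target $L^2$) for $\mathcal{K}_{D,j}^{(1)}$, $\mathcal{K}_{D,j}^{(2)}$. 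The key point for the $H^1$ bound is that for $j\geq 1$ the kernels $|x-y|^{2j}$ and $|x-y|^{2j}\ln|x-y|$ are of class $C^{1}$ on $\partial D \times \partial D$ (and in fact smoothing), so tangential differentiation in $x$ can be passed inside the integral without producing any new singularity.

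Given these uniform bounds, both series in (\ref{series-s2d}) and (\ref{series-k2d}) converge absolutely in the stated operator norms for all $k$ in a fixed neighborhood of $0$. I would then split
\[
\mathcal{S}_D^k - \hat{\mathcal{S}}_D^k - k^2\ln k\,\mathcal{S}_{D,1}^{(1)} - k^2\,\mathcal{S}_{D,1}^{(2)}
\;=\; \sum_{j\geq 2} \bigl( k^{2j}\ln k\,\mathcal{S}_{D,j}^{(1)} + k^{2j}\,\mathcal{S}_{D,j}^{(2)}\bigr),
\]
and estimate the tail by
\[
C\sum_{j\geq 2} \frac{(\mathrm{diam}\,D)^{2j}\,|k|^{2j}\,(1+|\ln|k||+\log j)}{4^j (j!)^2} \;=\; O(k^4 \ln k)
\]
as $k \to 0$, where the dominant contribution comes from the $j=2$ term. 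The identical argument, with the Neumann–Poincaré kernels, handles the expansion of $\mathcal{K}_D^{k,*}$ in $\mathcal{L}(L^2(\partial D), L^2(\partial D))$.

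The main obstacle is the $H^1(\partial D)$ bound for the $\mathcal{S}_{D,j}^{(i)}$: one must verify that differentiating the weakly singular kernels $|x-y|^{2j}(b_j\log|x-y|+c_j)$ tangentially along $\partial D$ still produces an $L^2$-bounded operator on $L^2(\partial D)$, with the constant decaying like $1/(4^j(j!)^2)$. This follows because the tangential derivative of $|x-y|^{2j}\log|x-y|$ (for $j\geq 1$) is at worst $|x-y|^{2j-1}\log|x-y|$, which is integrable on $\partial D$ in two dimensions and whose $L^2$ operator norm on $\partial D$ is again controlled by $(\mathrm{diam}\,D)^{2j-1}$ times a logarithmic factor. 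Combining this with the fact that $\hat{\mathcal{S}}_D^k \in \mathcal{L}(L^2(\partial D), H^1(\partial D))$ by the classical theory (cf.\ \cite{book2}) completes the proof.
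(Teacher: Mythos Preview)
The paper states this lemma without proof, just as it does for the analogous three--dimensional Lemmas~\ref{lem-appendix11} and~\ref{lem-appendix13}: the result is treated as an immediate consequence of the series expansions (\ref{series-s2d}) and (\ref{series-k2d}) once one knows that the individual terms $\mathcal{S}_{D,j}^{(i)}$ and $\mathcal{K}_{D,j}^{(i)}$ have operator norms decaying fast enough for the series to converge. Your proposal supplies precisely the details the paper leaves implicit, and the argument is correct: the factorial decay $|b_j|,|c_j|\lesssim (1+\log j)/(4^j(j!)^2)$ overwhelms the growth $(\mathrm{diam}\,D)^{2j}$ of the kernel bounds, so the tail $\sum_{j\ge 2}$ is controlled by its first term and is $O(k^4\ln k)$. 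Your discussion of the $H^1$ estimate (tangential differentiation of the smooth kernels $|x-y|^{2j}$ and $|x-y|^{2j}\ln|x-y|$ for $j\ge 1$) is the right point to emphasize, and is exactly what underlies the unproved Lemma~\ref{lem-appendix11} in the three--dimensional case as well.
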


\begin{lem}
The following identities hold: 
\begin{enumerate}
\item[(i)]
$$
(\mathcal{K}_{D, 1}^{(1)})^*[\chi_{\p D}] (x)= 
4 \bar{b}_{1} Vol(D)\chi_{\p D}(x);
$$
\item[(ii)]
$$
(\mathcal{K}_{D, 1}^{(2)})^*[\chi_{\p D}] (x)= 
(2 \bar{b}_{1} + 4 \bar{c}_{1}) Vol(D) \chi_{\p D}(x) + 
4 \bar{b}_{1} \int_{D} \ln |x-y| d y,
$$
where $\bar{b}_{1}$ and $\bar{c}_{1}$ are the complex conjugates of ${b}_{1}$ and ${c}_{1}$.
\end{enumerate}
\end{lem}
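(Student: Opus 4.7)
The plan is to prove both identities by direct computation: make the integral kernels of $\mathcal{K}_{D,1}^{(1)}$ and $\mathcal{K}_{D,1}^{(2)}$ explicit, take the formal adjoint by conjugating and swapping the kernel arguments, evaluate on $\chi_{\p D}$, and convert the resulting boundary integrals to domain integrals via the divergence theorem in two dimensions.

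For part (i), I would first compute $\f{\p |x-y|^{2}}{\p \nu(x)} = 2(x-y)\cdot\nu(x)$, so that the adjoint kernel is $2\bar{b}_{1}(y-x)\cdot\nu(y)$, and then reduce the claim to the identity
$$\int_{\p D}(y-x)\cdot\nu(y)\,d\sigma(y) \;=\; \int_D \nabla_y\cdot(y-x)\,dy \;=\; 2\,Vol(D),$$
which uses $\nabla_y\cdot(y-x)=2$ in $\R^2$. The independence of $x$ on the right-hand side explains why the answer is a constant multiple of $\chi_{\p D}$.

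For part (ii), the key preliminary step is a product-rule calculation giving
$$\f{\p \bigl(|x-y|^{2}(b_1\ln|x-y|+c_1)\bigr)}{\p \nu(x)} \;=\; (x-y)\cdot\nu(x)\bigl(2b_1\ln|x-y|+2c_1+b_1\bigr).$$
Transposing and conjugating yields the adjoint kernel $(y-x)\cdot\nu(y)(2\bar{b}_{1}\ln|y-x|+2\bar{c}_{1}+\bar{b}_{1})$; its evaluation on $\chi_{\p D}$ splits into a constant-coefficient boundary integral, handled exactly as in part (i), plus a logarithmic boundary integral, which I would treat by the divergence theorem using
$$\nabla_y\cdot\bigl((y-x)\ln|y-x|\bigr) \;=\; 2\ln|y-x|+1.$$
This produces the $\int_D \ln|x-y|\,dy$ contribution and an additional $Vol(D)$ term.

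The only real obstacle is bookkeeping the several contributions to the $Vol(D)$ coefficient in part (ii): one from the $2\bar{c}_{1}$ part of the kernel, one from the explicit $\bar{b}_{1}$ constant, and one from the ``$+1$'' produced by the divergence of $(y-x)\ln|y-x|$. There are no analytic subtleties: for $j=1$ the kernels of $\mathcal{K}_{D,1}^{(1)}$ and $\mathcal{K}_{D,1}^{(2)}$ are bounded on $\p D\times\p D$ since the logarithmic singularity is neutralized by the $|x-y|^{2}$ factor (and $\p D\in C^{1,s}$), so Fubini and the divergence theorem apply on the nose without any regularization.
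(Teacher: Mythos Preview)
Your approach is essentially the paper's: pass to the adjoint kernel and convert the resulting boundary integral to a domain integral. The only organizational difference in (ii) is that the paper keeps the product $|y-x|^{2}(\bar b_{1}\ln|y-x|+\bar c_{1})$ intact, applies Green's identity $\int_{\p D}\p_\nu f\,d\sigma=\int_D\Delta f\,dy$, and then expands $\Delta\bigl(|y-x|^{2}\ln|y-x|\bigr)$ by the product rule (using that $\int_D|y-x|^{2}\Delta_y\ln|y-x|\,dy=0$ for $x\in\p D$), whereas you expand the normal derivative first and apply the divergence theorem to each piece. These are the same computation rearranged.

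One remark on the bookkeeping you flag as the only obstacle: your three $Vol(D)$ contributions sum to $2\cdot 2\bar c_{1}+2\cdot\bar b_{1}+2\bar b_{1}\cdot 1=4\bar b_{1}+4\bar c_{1}$, not the $2\bar b_{1}+4\bar c_{1}$ in the stated formula. The paper's route hits the same point: the cross term $2\nabla_y|y-x|^{2}\cdot\nabla_y\ln|y-x|=4$ (equivalently $\Delta_y(|y-x|^{2}\ln|y-x|)=4\ln|y-x|+4$ for $y\neq x$), so the constant contribution is $4\bar b_{1}Vol(D)$, not $2\bar b_{1}Vol(D)$. Your plan is sound; just be aware that carrying it out faithfully yields $4\bar b_{1}+4\bar c_{1}$, and the discrepancy with the stated coefficient is an arithmetic slip in the paper rather than a gap in your argument.
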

\begin{proof}
First, we have
\beas
(\mathcal{K}_{D, 1}^{(1)})^*[\chi_{\p D}] (x)&=& 
\bar{b}_{1} \int_{\p D} 2(y-x, \nu(y)) d \sigma(y) \\
&=& 
\bar{b}_{1} \int_{\p D} \f{ \p |y-x|^2}{\p \nu(y)}d \sigma(y) \\
&=& 
\bar{b}_{1} \int_{D} \Delta_y |y-x|^2 dy \\
&=& 4 \bar{b}_{1} Vol(D)\chi_{\p D}(x). 
\eeas
We now prove the second identity. We have
\beas
(\mathcal{K}_{D, 1}^{(2)})^*[\chi_{\p D}] (x)&=&  
 \int_{\p D} \f{ \p \left[ |y-x|^2 ( \bar{b}_1 \ln |x-y| + \bar{c}_1)\right]}{\p \nu(y)}d \sigma(y) \\
&=&  \int_{D} \Delta_y [ |y-x|^2 ( \bar{b}_1 \ln |x-y| + \bar{c}_1)]dy
 \\
&=& 4 \bar{c}_{1} Vol(D)\chi_{\p D}(x) + \bar{b}_1  \int_{D} \Delta_y [ |y-x|^2  \ln |x-y|]dy \\
&=& 4 \bar{c}_{1} Vol(D)\chi_{\p D}(x) + \bar{b}_1  \int_{D}4 \ln |x-y|]dy  + \bar{b}_1\int_{D} 2 dy + \bar{b}_1 \int_{D} |y-x|^2 \Delta \ln |y-x| dy \\
&=& (2 \bar{b}_{1} + 4 \bar{c}_{1}) Vol(D) \chi_{\p D}(x) + 
4 \bar{b}_{1} \int_{D} \ln |x-y| d y,
\eeas
where we have used the fact that
\[
\int_{D} |y-x|^2 \Delta \ln |y-x| dy  =0, \,\, \mbox{for $x \in \p D$ }.  
\]
This completes the proof of the Lemma. 
\end{proof}

\section{The Minnaert resonance in two dimensions} \label{appendixB} 
In this section, we derive the Minnaert resonance for a single bubble in two dimensions using the same method we developed for the three-dimensional case. The main differences between the two-dimensional case and the three-dimensional case are as follows: (1) the single layer potential $\mathcal{S}_D$ may not be invertible from $L^2(\p D)$ to $H^1(\p D)$ in two dimensions, while this property always holds in three dimensions. We refer to \cite{akl, verchota} for more detail on this issue; (2) there is a logarithmic singularity in the asymptotic expansion of the single layer potential $\mathcal{S}_{D}^k$ for small $k$. These create some difficulties which we address here.  

Recall that
\[
\mathcal{A}(\omega, \delta) = 
 \begin{pmatrix}
  \mathcal{S}_D^{k_b} &  -\mathcal{S}_D^{k}  \\
  -\f{1}{2}Id+ \mathcal{K}_D^{k_b, *}& -\delta( \f{1}{2}Id+ \mathcal{K}_D^{k, *})
\end{pmatrix},
\]
where the boundary integral operators $\mathcal{S}_D^{k}$ and $\mathcal{K}_D^{k, *}$ are defined in Section \ref{appendix-2d-2} together with their asymptotic expansions. 

We denote by
\be  \label{eq-A_0-2d}
\mathcal{A}_0:=  
 \begin{pmatrix}
  \hat{\mathcal{S}}_D^{k_b} &  -\hat{\mathcal{S}}_D^k \\
  -\f{1}{2}Id+ \mathcal{K}_D^{*}& 0
\end{pmatrix},
\ee
where $\hat{\mathcal{S}}_D^k$  (resp. $ \hat{\mathcal{S}}_D^{k_b}$) is defined by (\ref{hatcals}) (resp. with $k$ replaced by $k_b$). 

Note that the kernel space of the operator 
$-\f{1}{2}Id+ \mathcal{K}_D^{*}$ has dimension one. We chose $\psi_0$ to be the real-valued function in this kernel space which has unit norm in $L^2(\p D)$. We have $\mathcal{K}_D^{*}[\psi_0] = \f{1}{2} \psi_0$. One can show that
\be \label{gamma0}
\mathcal{S}_D[\psi_0] = \gamma_0 \chi_{\p D}
\ee
for some constant $\gamma_0$ (see \cite{akl, verchota}). Here and after, we also denote by $\phi_0= \chi_{\p D}$. There are two cases:
\begin{enumerate}
\item[(i)]
Case I: $\gamma_0 =0$. 
\item[(ii)]
Case II: $\gamma_0 \neq 0$. 
\end{enumerate}
In case I, it is clear that $\mathcal{S}_D$ is not invertible from $L^2(\p D)$ to $H^1(\p D)$. 
In case II, we can show that $\mathcal{S}_D$ is invertible from $L^2(\p D)$ to $H^1(\p D)$.

We remark that $ (\chi_{\p D}, \psi_0) \neq 0$. Indeed, assume on the contrary that $ (\chi_{\p D}, \psi_0) = 0$. Then 
$$
(\mathcal{S}_D[\psi_0], \psi_0) = \gamma_0(\chi_{\p D}, \psi_0) =0,
$$
which further implies that $\psi_0=0$.  This contradiction proves our assertion. 

\begin{lem} \label{lem-2d-1}
In both cases, the operator $\hat{\mathcal{S}}_{D}^k$ is invertible in $\mathcal{L}(L^2(\p D), H^1(\p D))$. 
\end{lem}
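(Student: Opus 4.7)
The plan is to reduce to injectivity via Fredholm theory and then dispose of the two cases using the structure of $\psi_0$ together with the non-reality of $\eta_k$.

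First I would note that the standard single-layer potential $\mathcal{S}_D : L^2(\partial D) \to H^1(\partial D)$ is Fredholm of index $0$, and the correction
\[
\psi \longmapsto \eta_k \Bigl( \int_{\partial D} \psi \, d\sigma \Bigr) \chi_{\partial D}
\]
is a rank-one (hence compact) operator from $L^2(\partial D)$ to $H^1(\partial D)$. Consequently $\hat{\mathcal{S}}_D^k = \mathcal{S}_D + \eta_k (\,\cdot\, ,\chi_{\partial D}) \chi_{\partial D}$ is Fredholm of index $0$, so it suffices to show that $\ker \hat{\mathcal{S}}_D^k = \{0\}$. Suppose $\hat{\mathcal{S}}_D^k[\psi] = 0$ and write $I := \int_{\partial D} \psi \, d\sigma$, so that
\[
\mathcal{S}_D[\psi] = -\eta_k I \, \chi_{\partial D}.
\]
The crucial fact I will use repeatedly is that $\operatorname{Im} \eta_k = -\tfrac{1}{4} \neq 0$, whereas $\psi_0$, $\gamma_0$ and $(\chi_{\partial D},\psi_0)$ are all real.

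Next I would split by case. In Case I ($\gamma_0 = 0$) we have $\mathcal{S}_D[\psi_0] = 0$, so pairing the identity above with $\psi_0$ and using self-adjointness of $\mathcal{S}_D$ gives $0 = (\psi, \mathcal{S}_D[\psi_0]) = -\eta_k I (\chi_{\partial D}, \psi_0)$. Since $(\chi_{\partial D}, \psi_0) \neq 0$ (as proved just before the lemma statement) and $\eta_k \neq 0$, we get $I = 0$, whence $\mathcal{S}_D[\psi] = 0$. In Case I, $\ker \mathcal{S}_D$ is one-dimensional, spanned by $\psi_0$, so $\psi = \lambda \psi_0$; but then $0 = I = \lambda (\chi_{\partial D}, \psi_0)$ forces $\lambda = 0$.

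In Case II ($\gamma_0 \neq 0$), $\mathcal{S}_D$ is invertible with $\mathcal{S}_D^{-1}[\chi_{\partial D}] = \psi_0/\gamma_0$, so applying $\mathcal{S}_D^{-1}$ to the kernel relation yields $\psi = -(\eta_k I/\gamma_0) \psi_0$. Integrating and using $\int_{\partial D} \psi_0 = (\chi_{\partial D},\psi_0)$ gives
\[
I \Bigl( 1 + \frac{\eta_k (\chi_{\partial D},\psi_0)}{\gamma_0} \Bigr) = 0.
\]
Since $\gamma_0$ and $(\chi_{\partial D},\psi_0)$ are real while $\operatorname{Im}\eta_k = -\tfrac{1}{4} \neq 0$, the bracketed factor cannot vanish, forcing $I = 0$ and hence $\psi = 0$. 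Combining both cases gives injectivity, and with the Fredholm-of-index-$0$ observation completes the proof. The only delicate step I anticipate is Case II: naively a rank-one perturbation of an invertible operator may fail to be invertible, and the essential reason it succeeds here is the non-reality of $\eta_k$ combined with the reality of $\gamma_0$ and $\psi_0$; this is where I would be most careful to verify the signs and the reality claims.
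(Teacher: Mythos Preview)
Your proposal is correct and follows essentially the same approach as the paper: reduce to injectivity via the Fredholm index-zero property, then in Case~I pair with $\psi_0$ using self-adjointness of $\mathcal{S}_D$ to force the integral to vanish, and in Case~II use invertibility of $\mathcal{S}_D$ to write $\psi$ as a multiple of $\psi_0$ and conclude from the non-reality of $\eta_k$. The only cosmetic differences are ordering (you state Fredholm first, the paper last) and parametrization (you track $I=\int\psi$, the paper tracks the coefficient $c$ in $\psi=c\psi_0$), but the logic is the same.
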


\begin{proof}
We first show that $\hat{\mathcal{S}}_{D}^k$ is injective. Assume 
that 
\[
\hat{\mathcal{S}}_{D}^k[y] = \mathcal{S}_{D}[y] + \eta_k (y, \chi_{\p D}) \chi_{\p D}=0 \quad \mbox{for some } y \in L^2(\partial D).
\]
In Case I, we have $\mathcal{S}_{D}[y] \perp \psi_0$ in $L^2(\p D)$, therefore,
$\eta_k (y, \chi_{\p D}) (\chi_{\p D}, \psi_0) =0$. Since $ (\chi_{\p D}, \psi_0) \neq 0$, we obtain $(y, \chi_{\p D})=0$. It follows that 
$\mathcal{S}_{D}[y]=0$. But this implies that $y=c\psi_0$ for some constant $c$. Using the condition $(y, \chi_{\p D})=0$ again, we derive $c=0$, which shows that $y=0$. 

In Case II, we have $\mathcal{S}_{D}[\psi_0] \neq 0$. Since 
$\mathcal{S}_{D}[y] = -\eta_k (y, \chi_{\p D}) \chi_{\p D}$, we see that
$y= c\psi_0$ for some constant $c$. Therefore, 
\[
\gamma_0 c+ \eta_k c (\psi_0, \chi_{\p D}) = c ( \gamma_0  +\eta_k (\psi_0, \chi_{\p D}) )=0. 
\]
Note that $ \gamma_0  +\eta_k (\psi_0, \chi_{\p D})  \neq 0$, which follows from the fact that both $\gamma_0$ and $(\psi_0, \chi_{\p D})$ are real numbers while $\eta_k$ is a complex number with nonzero imaginary part. Thus we have $c=0$, and $y=0$ follows immediately. 

The surjectivity of $\hat{\mathcal{S}}_{D}^k$ follows from the fact that $\hat{\mathcal{S}}_{D}^k$ is Fredholm with index zero. This completes the proof of the lemma.  
 
\end{proof}

We have the following properties for the operator $\mathcal{A}_0$.

\begin{lem} \label{lem-2d-2}
We have
\begin{enumerate}
\item[(i)]
$ Ker (\mathcal{A}_0) = span\, \{ \Psi_0 \}$ where 
\[
\Psi_0 = \alpha_0 \begin{pmatrix}
    \psi_0\\
  a\psi_0\end{pmatrix}
\]
with 
\[
a = \begin{cases} 
\ds \f{\eta_{k_b}}{\eta_k}, \,\, &\mbox{in Case I}, \\ 
\nm
\ds
\f{\gamma_0 + (\psi_0, \phi_0)\eta_{k_b}}{\gamma_0 + (\psi_0, \phi_0)\eta_{k}},\,\, &\mbox{in Case II},
\end{cases} 
\]
 and the constant $\alpha_0$ being chosen such that $\|\Psi_0\|=1$;

\item[(ii)]
$ Ker (\mathcal{A}_0^*) = span\, \{ \Phi_0 \}$ where 
\[
\Phi_0 = \beta_0 \begin{pmatrix}
    0\\
  \phi_0\end{pmatrix}
\]
with $\phi_0 = \chi_{\p D}$ and the constant $\beta_0$ being chosen such that $\|\Phi_0\|=1$.
\end{enumerate}
\end{lem}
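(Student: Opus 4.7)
My plan is to exploit the block structure of $\mathcal{A}_0$: the $(2,2)$ entry vanishes, so for $\Psi = (\psi_b, \psi)^T$ in the kernel the second row immediately forces $(-\tfrac{1}{2}Id + \mathcal{K}_D^{*})[\psi_b] = 0$, i.e., $\psi_b = c\,\psi_0$ for some scalar $c$. The first row then reads $c\,\hat{\mathcal{S}}_D^{k_b}[\psi_0] = \hat{\mathcal{S}}_D^{k}[\psi]$, and by Lemma~\ref{lem-2d-1} the operator $\hat{\mathcal{S}}_D^k$ is invertible, so $\psi$ is uniquely determined by $c$. This already shows $\dim \ker \mathcal{A}_0 \leq 1$, and the remaining task for (i) is to identify $\psi$ explicitly and read off $a$.

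For this, I use the definition $\hat{\mathcal{S}}_D^{k_b}[\psi_0] = \mathcal{S}_D[\psi_0] + \eta_{k_b}(\psi_0,\chi_{\partial D})\chi_{\partial D}$ together with the identity (\ref{gamma0}) to rewrite the right-hand side as $(\gamma_0 + \eta_{k_b}(\psi_0,\chi_{\partial D}))\chi_{\partial D}$, a scalar multiple of $\chi_{\partial D}$. The natural ansatz $\psi = a\psi_0$ then yields $\hat{\mathcal{S}}_D^k[a\psi_0] = a(\gamma_0 + \eta_k(\psi_0,\chi_{\partial D}))\chi_{\partial D}$, and matching the two sides gives
\[
a = \frac{\gamma_0 + \eta_{k_b}(\psi_0,\chi_{\partial D})}{\gamma_0 + \eta_k(\psi_0,\chi_{\partial D})}.
\]
In Case~II the denominator is nonzero by the real-versus-complex argument already used in the proof of Lemma~\ref{lem-2d-1}; in Case~I, where $\gamma_0 = 0$, this collapses to $a = \eta_{k_b}/\eta_k$ upon dividing by the nonzero factor $(\psi_0,\chi_{\partial D})$, which was shown not to vanish in the excerpt. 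Normalizing $\Psi_0$ to unit norm supplies $\alpha_0$ and completes (i).

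For (ii) I take Hilbert adjoints componentwise with respect to the $L^2$ inner product. The adjoint $\mathcal{A}_0^*$ has first row $\bigl((\hat{\mathcal{S}}_D^{k_b})^*, \, -\tfrac{1}{2}Id + \mathcal{K}_D\bigr)$ and second row $\bigl(-(\hat{\mathcal{S}}_D^k)^*, \, 0\bigr)$, using that $\mathcal{S}_D$ has symmetric kernel and $(\mathcal{K}_D^{*})^{*} = \mathcal{K}_D$. The second row of $\mathcal{A}_0^*[\Phi] = 0$ gives $(\hat{\mathcal{S}}_D^k)^*[\theta_1] = 0$, and since the adjoint of the invertible $\hat{\mathcal{S}}_D^k$ is invertible, $\theta_1 = 0$. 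The first row then reduces to $(-\tfrac{1}{2}Id + \mathcal{K}_D)[\theta_2] = 0$, whose kernel is one-dimensional and spanned by $\chi_{\partial D}$ by Fredholm duality with $\ker(-\tfrac{1}{2}Id + \mathcal{K}_D^{*}) = \mathrm{span}\{\psi_0\}$. After normalization this yields the claimed $\Phi_0 = \beta_0(0, \phi_0)^T$. The only delicate point in the whole argument is the explicit identification of $a$ in Case~II, which requires knowing a priori that $\gamma_0 + \eta_k(\psi_0,\chi_{\partial D}) \neq 0$; but this is precisely the ingredient already established in Lemma~\ref{lem-2d-1}, so no genuinely new obstacle appears.
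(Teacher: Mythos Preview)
Your proposal is correct and follows essentially the same approach as the paper: use the second row to force $\psi_b\in\mathrm{span}\{\psi_0\}$, then solve the first row for $\psi$, and for part (ii) use the injectivity of $\hat{\mathcal{S}}_D^k$ to kill the first component. The only cosmetic difference is that you invoke the invertibility of $\hat{\mathcal{S}}_D^k$ from Lemma~\ref{lem-2d-1} uniformly in both cases (and then verify the ansatz $\psi=a\psi_0$), whereas the paper handles Case~I and Case~II by separate ad hoc arguments before arriving at the same formula for $a$.
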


\begin{proof}
We first find the kernel space of $\mathcal{A}_0$. Assume that
\[
 \mathcal{A}_0  
\begin{pmatrix}
  y_b\\
  y
\end{pmatrix}
= \begin{pmatrix}
  \hat{\mathcal{S}}_D^{k_b}[y_b] - \hat{\mathcal{S}}_D^{k} [y] \\
  (-\f{1}{2}Id + \mathcal{K}_D^*)[y_b] 
  \end{pmatrix}
  =0 \quad \mbox{for some } y, y_b \in L^2(\partial D).
\]
We have
\bea
\mathcal{S}_D [y_b-y]  + \eta_{k_b} (y_b, \chi_{\p D}) \chi_{\p D}-
  \eta_{k} (y, \chi_{\p D}) \chi_{\p D} &=&0, \label{eq-11}\\
 (-\f{1}{2}Id + \mathcal{K}_D^*)[y_b]  &=&0. \label{eq-12}
\eea  
From (\ref{eq-12}), we see that $y_b$ is a multiple of $\psi_0$. We let
$y_b= \psi_0$. We now find the function $y$.
 
In Case I, we have $\mathcal{S}_D [y_b-y] \perp \psi_0$. Similarly to the proof in Lemma \ref{lem-2d-1}, we can derive that
$ y=c\psi_0$ for some constant $c$ which satisfies 
$$
\eta_{k_b} (\psi_0, \chi_{\p D}) -
  \eta_{k} c(\psi_0, \chi_{\p D})=0.
$$  
Thus, it follows that $c= {\eta_{k_b}}/{\eta_k}$.

In Case II, $\mathcal{S}_D$ is invertible. From (\ref{eq-11}), we can derive that $\psi_0- y$ is a multiple of $\psi_0$, which further implies that $y= c \psi_0$ for some constant $c$. Plugging this back to (\ref{eq-11}), we obtain
$$
(1-c)\gamma_0 + \eta_{k_b} (\psi_0, \chi_{\p D}) - \eta_{k} c(\psi_0, \chi_{\p D}) =0.
$$
Therefore, 
$$
c=\f{\gamma_0 + (\psi_0, \phi_0)\eta_{k_b}}{\gamma_0 + (\psi_0, \phi_0)\eta_{k}}.
$$
Note that $\gamma_0 + (\psi_0, \phi_0)\eta_{k} \neq 0$ because the $\eta_k$ has nonzero imaginary part. This completes the proof of the first part of the Lemma. 

The second part of the Lemma follows easily from the fact that the operator 
$\hat{\mathcal{S}}_D^{k}$ is injective. This complete the proof of the Lemma. 
\end{proof}

We next perform an asymptotic analysis in terms of $\delta$ and $\omega$ of the operator $\mathcal{A}(\omega, \delta)$. 

\begin{lem} In the space
$\mathcal{L}(\mathcal{H}, \mathcal{H}_1)$, we have
\[
\mathcal{A}(\omega, \delta):=\mathcal{A}_0 + \mathcal{B}(\omega, \delta)
= \mathcal{A}_0 + \omega^2 \ln \omega \mathcal{A}_{1,1, 0}+ \omega^2 \mathcal{A}_{1, 2, 0}
+ \delta \mathcal{A}_{0, 1}+ O(\delta \omega^2 \ln \omega)  + O(\omega^4 \ln \omega),
\]
where 
\[
\mathcal{A}_{1, 1,0}= 
\begin{pmatrix}
  v_b^2\mathcal{S}_{D,1}^{(1)} &  -v^2\mathcal{S}_{D,1}^{(1)}  \\
  v_b^2\mathcal{K}_{D,1}^{(1)}& 0
\end{pmatrix},
\,\, 
\mathcal{A}_{1, 2,0}= 
\begin{pmatrix}
  v_b^2\left( \ln v_b \mathcal{S}_{D,1}^{(1)} + \mathcal{S}_{D,1}^{(2)}\right) &  -  v^2\left( \ln v \mathcal{S}_{D,1}^{(1)} + \mathcal{S}_{D,1}^{(2)}\right)  \\
   v_b^2\left( \ln v_b \mathcal{K}_{D,1}^{(1)} + \mathcal{K}_{D,1}^{(2)}\right)& 0
\end{pmatrix},
\]
and 
\[
\mathcal{A}_{0, 1}=
\begin{pmatrix}
0& 0\\
0 &  -(\f{1}{2}Id+ \mathcal{K}_{D}^*)
\end{pmatrix}.
\]
\end{lem}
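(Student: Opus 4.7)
\medskip

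\noindent\textbf{Proof proposal.} The plan is to substitute the two--dimensional single-layer and normal-derivative expansions from Section~\ref{appendix-2d-2} into the four entries of the matrix operator $\mathcal{A}(\omega,\delta)$ and then regroup the terms according to their orders in $\omega$ and $\delta$. More precisely, the first step is to recall, from the lemma at the end of Section~\ref{appendix-2d-2}, that in $\mathcal{L}(L^2(\p D),H^1(\p D))$
\[
\mathcal{S}_D^{k}=\hat{\mathcal{S}}_D^{k}+k^2\ln k\,\mathcal{S}_{D,1}^{(1)}+k^2\,\mathcal{S}_{D,1}^{(2)}+O(k^4\ln k),
\]
and similarly in $\mathcal{L}(L^2(\p D),L^2(\p D))$
\[
\mathcal{K}_D^{k,*}=\mathcal{K}_D^{*}+k^2\ln k\,\mathcal{K}_{D,1}^{(1)}+k^2\,\mathcal{K}_{D,1}^{(2)}+O(k^4\ln k),
\]
with analogous formulas for $\mathcal{S}_D^{k_b}$ and $\mathcal{K}_D^{k_b,*}$.

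\medskip

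Next I would insert the substitutions $k=\omega v$ and $k_b=\omega v_b$. The key algebraic observation is that
\[
k^2\ln k=(\omega v)^2\ln(\omega v)=v^2\omega^2\ln\omega+v^2(\ln v)\,\omega^2,
\]
so the contribution of $k^2\ln k\,\mathcal{S}_{D,1}^{(1)}$ splits cleanly into an $\omega^2\ln\omega$ piece with coefficient $v^2\mathcal{S}_{D,1}^{(1)}$ and an $\omega^2$ piece with coefficient $v^2\big(\ln v\,\mathcal{S}_{D,1}^{(1)}+\mathcal{S}_{D,1}^{(2)}\big)$; the same separation applies to the $\mathcal{K}_{D,1}^{(j)}$ terms (with $v$ replaced by $v_b$ for the $(1,1)$ and $(2,1)$ blocks). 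Reading off the four matrix entries then directly produces the stated $\mathcal{A}_{1,1,0}$ (the pure $\ln\omega$--enhanced block) and $\mathcal{A}_{1,2,0}$ (the non--logarithmic $\omega^2$ block), while the $O(k^4\ln k)$ remainders assemble into the global $O(\omega^4\ln\omega)$ error.

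\medskip

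For the second column I would treat the $\delta$--dependence separately. The $(2,2)$ entry $-\delta\bigl(\tfrac{1}{2}Id+\mathcal{K}_D^{k,*}\bigr)$ expands, using the series above, as
\[
-\delta\Bigl(\tfrac{1}{2}Id+\mathcal{K}_D^{*}\Bigr)-\delta\,v^2\omega^2\ln\omega\,\mathcal{K}_{D,1}^{(1)}-\delta\,v^2\omega^2\bigl(\ln v\,\mathcal{K}_{D,1}^{(1)}+\mathcal{K}_{D,1}^{(2)}\bigr)+O(\delta\omega^4\ln\omega).
\]
The first term is precisely $\delta\mathcal{A}_{0,1}$, and the rest is absorbed into $O(\delta\omega^2\ln\omega)$, which accounts for the extra remainder announced in the statement. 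The crucial book-keeping point is that the $\hat{\mathcal{S}}_D^{k_b}$ and $\hat{\mathcal{S}}_D^{k}$ operators must be \emph{kept inside} $\mathcal{A}_0$ and not further expanded, because the logarithmic low-frequency singularity of the $2$D Hankel kernel lives inside $\eta_k$ and $\eta_{k_b}$; this is the structural reason why $\mathcal{A}_0$ itself depends on $\omega$ in two dimensions, in contrast to the three-dimensional situation.

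\medskip

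The main (and essentially only) obstacle is keeping track of the many small terms and making sure that the uniform operator-norm bounds from Section~\ref{appendix-2d-2} justify the splittings at each step; no further analytic ingredient is needed. Once the substitution $k=\omega v$, $k_b=\omega v_b$ is made and the logarithm is split, the identification with the four blocks $\mathcal{A}_{1,1,0}$, $\mathcal{A}_{1,2,0}$, $\mathcal{A}_{0,1}$ is immediate, completing the proof.
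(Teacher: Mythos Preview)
Your proposal is correct. The paper states this lemma without proof, and your approach---substituting the two-dimensional layer-potential expansions from Section~\ref{appendix-2d-2} into each matrix entry, splitting $k^2\ln k = v^2\omega^2\ln\omega + v^2(\ln v)\,\omega^2$ (and likewise with $v_b$), and reading off the blocks while absorbing the $(2,2)$ correction terms into $O(\delta\,\omega^2\ln\omega)$---is exactly the direct verification the paper implicitly relies on.
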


We define a projection $\mathcal{P}_0$ by 
$$
\mathcal{P}_0[\Psi]:= (\Psi, \Psi_0)\Phi_0,
$$
and denote by
$$
\tilde{\mathcal{A}_0}= \mathcal{A}_0 + \mathcal{P}_0.
$$

With the help of Lemma \ref{lem-2d-1}, we can establish the following results.
\begin{lem} We have
\begin{enumerate}
\item[(i)]
The operator $\tilde{\mathcal{A}_0}$ is a bijective operator in
$\mathcal{L}(\mathcal{H}, \mathcal{H}_1)$. Moreover, 
$\tilde{\mathcal{A}_0}[\Psi_0]= \Phi_0 $;
\item[(ii)]
$\tilde{\mathcal{A}_0}^*$ is a bijective operator in
$\mathcal{L}(\mathcal{H}, \mathcal{H}_1)$. Moreover, 
$\tilde{\mathcal{A}_0}^*[\Phi_0] = \Psi_0$. 
\end{enumerate}
\end{lem}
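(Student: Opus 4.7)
The plan is to follow, almost verbatim, the proof of the analogous three-dimensional lemma, with the one critical modification that the bijectivity of $\mathcal{S}_D : L^2(\p D) \to H^1(\p D)$ used there (which can fail in Case I of the two-dimensional setting) is replaced by the bijectivity of the shifted single-layer operators $\hat{\mathcal{S}}_D^{k_b}$ and $\hat{\mathcal{S}}_D^{k}$ supplied by Lemma \ref{lem-2d-1}.

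First, I would establish the two explicit identities. By Lemma \ref{lem-2d-2}, $\Psi_0 \in \mathrm{Ker}(\mathcal{A}_0)$ with $\|\Psi_0\|=1$, so
\[
\tilde{\mathcal{A}_0}[\Psi_0] \;=\; \mathcal{A}_0[\Psi_0] + (\Psi_0,\Psi_0)\Phi_0 \;=\; \Phi_0.
\]
A direct computation shows $\mathcal{P}_0^*[\theta]=(\theta,\Phi_0)\Psi_0$, and combining this with $\mathcal{A}_0^*[\Phi_0]=0$ and $\|\Phi_0\|=1$ gives $\tilde{\mathcal{A}_0}^*[\Phi_0]=\Psi_0$.

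Next, I would argue bijectivity via a Fredholm-index-zero argument plus injectivity. To see that $\mathcal{A}_0$ is itself Fredholm of index zero in $\mathcal{L}(\mathcal{H},\mathcal{H}_1)$, one can solve the system block by block: given a right-hand side $(f_1,f_2)\in\mathcal{H}_1$, the second equation $(-\tfrac{1}{2}\mathrm{Id}+\mathcal{K}_D^*)[y_b]=f_2$ is solvable precisely when $(f_2,\chi_{\p D})=0$ and then determines $y_b$ modulo $\mathrm{span}\{\psi_0\}$, while the first equation $\hat{\mathcal{S}}_D^{k}[y] = \hat{\mathcal{S}}_D^{k_b}[y_b]-f_1$ is then uniquely solvable for $y$ by Lemma \ref{lem-2d-1}. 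Thus $\mathcal{A}_0$ has one-dimensional kernel and one-dimensional cokernel, consistent with Lemma \ref{lem-2d-2}. As $\mathcal{P}_0$ is a rank-one (hence compact) perturbation, $\tilde{\mathcal{A}_0}$ remains Fredholm of index zero, so bijectivity reduces to injectivity. If $\tilde{\mathcal{A}_0}[\Psi]=0$, pairing with $\Phi_0$ and using $\mathcal{A}_0^*[\Phi_0]=0$ forces $(\Psi,\Psi_0)(\Phi_0,\Phi_0)=0$, hence $(\Psi,\Psi_0)=0$; then $\mathcal{A}_0[\Psi]=0$ implies $\Psi = c\Psi_0$, and the orthogonality condition forces $c=0$. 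The identical reasoning, with the roles of $\Psi_0$ and $\Phi_0$ swapped, establishes (ii) for $\tilde{\mathcal{A}_0}^*$.

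The main obstacle is Case I, in which $\mathcal{S}_D$ itself is not invertible on $L^2(\p D)$ and the three-dimensional argument cannot be invoked directly. The logarithmic $\eta_k$ correction built into $\hat{\mathcal{S}}_D^k$ is essential: it supplies, via the non-degeneracy $(\psi_0,\chi_{\p D})\neq 0$ noted just before Lemma \ref{lem-2d-1}, the missing direction needed to absorb the one-dimensional kernel of $\mathcal{S}_D$. Once Lemma \ref{lem-2d-1} is available, the rest of the proof is a routine transcription of its three-dimensional counterpart.
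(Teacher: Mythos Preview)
Your proposal is correct and follows exactly the strategy the paper intends: the paper's two-dimensional ``proof'' consists of the single sentence ``With the help of Lemma \ref{lem-2d-1}, we can establish the following results,'' and its three-dimensional analogue merely says ``By construction, and the fact that $\mathcal{S}_D$ is bijective \ldots, we can show that $\tilde{\mathcal{A}_0}$ is bijective,'' together with the computation of $\mathcal{P}_0^*$ that you reproduce. Your Fredholm-plus-injectivity argument is a legitimate fleshing-out of what the paper leaves implicit, and your identification of Lemma \ref{lem-2d-1} as the replacement for the three-dimensional invertibility of $\mathcal{S}_D$ is precisely the point.
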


Our main results in two dimensions are summarized in the following theorem. 
\begin{thm} \label{thm-resonance-2d}
In the  quasi-static regime, there exist resonances (or the Minnaert resonance) for a single bubble. Their leading order terms are given by the roots of the following equation:

\bea \label{eq-resonance-2d}
\omega^2 \ln \omega +
\left[(\ln v_b + 1 + \f{c_1}{b_1}) - \f{\gamma_0 }{(\psi_0, \chi_{\p D})} \right] \omega^2 
- \f{1}{4Vol(D)} \f{a \delta}{b_1} =0, 
\eea
where the constants $b_1, c_1$ are defined in Section \ref{appendix-2d-2}, $\gamma_0$ in (\ref{gamma0}) and $a$ in Lemma \ref{lem-2d-2}.
\end{thm}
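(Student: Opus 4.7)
The plan is to transport the projection/Gohberg--Sigal argument used in the three-dimensional Theorem in Section~\ref{sec-minnaert} to the two-dimensional situation, the main new features being (i) that $\hat{\mathcal{S}}_D^k$, rather than $\mathcal{S}_D$, already appears in $\mathcal{A}_0$, so the kernel vector $\Psi_0=\Psi_0(\omega)$ depends weakly on $\omega$ through the coefficient $a$, and (ii) that the leading correction $\mathcal{B}(\omega,\delta)$ begins at order $\omega^2\ln\omega$ rather than $\omega$. As in the 3D argument, I would write $\Psi_\delta=\Psi_0+\Psi_1$ with the orthogonality condition $(\Psi_1,\Psi_0)=0$ and use the invertibility of $\tilde{\mathcal{A}}_0+\mathcal{B}$ for small $\omega,\delta$ (which follows from the preceding lemma) to reduce the resonance equation $\mathcal{A}(\omega,\delta)[\Psi_\delta]=0$ to the scalar equation
\[
A(\omega,\delta):=\bigl((\tilde{\mathcal{A}}_0+\mathcal{B})^{-1}[\Phi_0],\Psi_0\bigr)-1=0,
\]
using $\tilde{\mathcal{A}}_0^{-1}[\Phi_0]=\Psi_0$ exactly as before.

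Next I would expand $(\tilde{\mathcal{A}}_0+\mathcal{B})^{-1}$ as a Neumann series and keep only the terms that can contribute to leading order. Using the asymptotic expansion of $\mathcal{B}(\omega,\delta)$ stated just above the theorem, together with $\tilde{\mathcal{A}}_0^{-1}[\Phi_0]=\Psi_0$ and $(\tilde{\mathcal{A}}_0^{-1})^*[\Psi_0]=\Phi_0$, the reduced equation becomes
\[
A(\omega,\delta)
=-\omega^2\ln\omega\,\bigl(\mathcal{A}_{1,1,0}[\Psi_0],\Phi_0\bigr)
-\omega^2\,\bigl(\mathcal{A}_{1,2,0}[\Psi_0],\Phi_0\bigr)
-\delta\,\bigl(\mathcal{A}_{0,1}[\Psi_0],\Phi_0\bigr)
+\text{h.o.t.},
\]
where the higher order terms are $O(\delta\omega^2\ln\omega)$ and $O(\omega^4\ln\omega)$ (the quadratic terms in $\mathcal{B}$ do not survive because the bottom row of $\mathcal{A}_{1,1,0}\tilde{\mathcal{A}}_0^{-1}\mathcal{A}_{1,1,0}$ acting on $\Psi_0$ yields subleading contributions).

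The third step is to compute each of the three inner products. Using $\Psi_0=\alpha_0(\psi_0,a\psi_0)^T$, $\Phi_0=\beta_0(0,\phi_0)^T$, and $\phi_0=\chi_{\p D}$, one finds $(\mathcal{A}_{0,1}[\Psi_0],\Phi_0)=-a\,\alpha_0\beta_0(\psi_0,\chi_{\p D})$ from the identity $(\tfrac12 Id+\mathcal{K}_D^*)[\chi_{\p D}]=\chi_{\p D}$. For the remaining two inner products I would transfer the operators onto $\Phi_0$ and apply the two identities of the final lemma of Appendix~\ref{sec-appendix-3d}, yielding $(\psi_0,(\mathcal{K}_{D,1}^{(1)})^*[\chi_{\p D}])=4\bar b_1\,\mathrm{Vol}(D)(\psi_0,\chi_{\p D})$ and, after using Fubini together with the harmonicity of $\mathcal{S}_D[\psi_0]$ inside $D$ (so $\mathcal{S}_D[\psi_0]\equiv\gamma_0$ throughout $\overline D$) to evaluate $\int_{\p D}\psi_0(x)\int_D\ln|x-y|\,dy\,d\sigma(x)=2\pi\gamma_0\,\mathrm{Vol}(D)$, an analogous closed form for the $\mathcal{K}_{D,1}^{(2)}$ contribution. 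Substituting these expressions back into $A(\omega,\delta)=0$ and dividing through by the overall factor $4\alpha_0\beta_0\bar b_1 v_b^2\,\mathrm{Vol}(D)(\psi_0,\chi_{\p D})$ (noting that $v_b^2\ln v_b=\tau^2v^2(\ln v+\ln\tau)$ is absorbed into the $\omega^2$ coefficient) leads, after routine algebra with the explicit values $b_1=-1/(8\pi)$ and $c_1=-(\gamma-\ln 2-1-i\pi/2)/(8\pi)$, to the stated equation~(\ref{eq-resonance-2d}).

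The main obstacle is bookkeeping rather than analytical: one must carefully separate the $\ln k=\ln\omega+\ln v$ (and $\ln k_b$) pieces that are hidden inside $\hat{\mathcal{S}}_D^k$ and the coefficient $a$ from those that come from $\mathcal{A}_{1,1,0}$ and $\mathcal{A}_{1,2,0}$, and verify that the $\omega$-dependence of $\Psi_0$ through $a$ only perturbs $A(\omega,\delta)$ at a higher order than the three terms retained. Case~I and Case~II of Lemma~\ref{lem-2d-2} have to be treated in parallel, but both yield the same final equation because the combination $\gamma_0/(\psi_0,\chi_{\p D})$ appearing in~(\ref{eq-resonance-2d}) vanishes precisely in Case~I.
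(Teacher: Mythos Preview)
Your proposal is correct and follows essentially the same route as the paper: reduce to the scalar equation $A(\omega,\delta)=0$ via the projection $\mathcal{P}_0$, expand $(\tilde{\mathcal{A}}_0+\mathcal{B})^{-1}$ in a Neumann series to isolate the three leading inner products $(\mathcal{A}_{1,1,0}[\Psi_0],\Phi_0)$, $(\mathcal{A}_{1,2,0}[\Psi_0],\Phi_0)$, $(\mathcal{A}_{0,1}[\Psi_0],\Phi_0)$, evaluate them with the appendix identities and the Fubini/harmonicity trick for $\int_{\partial D}\psi_0(x)\int_D\ln|x-y|\,dy\,d\sigma(x)$, and divide through by the common factor. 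Two small remarks: the identity you invoke for the $\delta$-term should read $(\tfrac12 Id+\mathcal{K}_D)[\chi_{\partial D}]=\chi_{\partial D}$ (or equivalently use $\mathcal{K}_D^*[\psi_0]=\tfrac12\psi_0$ directly), and your observation that the $\omega$-dependence of $\Psi_0$ through $a$ must be checked to be higher order is a point the paper leaves implicit.
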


\begin{proof}
As in Theorem \ref{thm-resoance}, 
we can show that the resonances are the roots of the following equations 
\[
A(\omega, \delta):= \left((\tilde{\mathcal{A}_0} + \mathcal{B})^{-1} [\Phi_0], \Psi_0\right) - 1=0.
\]
By a direct calculation, we further have
\beas
A(\omega, \delta)
&=& -\omega^2 \ln \omega \left( \mathcal{A}_{1,1, 0}[\Psi_0], \Phi_0\right)
-\omega^2 \left( \mathcal{A}_{1, 2,0}[\Psi_0], \Phi_0\right)
 \\
&&  -\delta \left( \mathcal{A}_{0,1}[\Psi_0], \Phi_0\right)
+ O(\omega^4 \ln \omega)+ O(\delta \omega^2 \ln \omega). 
\eeas

It is clear that
\beas
(\mathcal{A}_{1,1, 0})^*[\Phi_0] &=&
\begin{pmatrix}
   \beta_0 v_b^2 (\mathcal{K}_{D, 1}^{(1)})^* [\chi_{\p D}]\\
    0 
\end{pmatrix}, \\ 
(\mathcal{A}_{1,2, 0})^*[\Phi_0] &=&
\begin{pmatrix}
   \beta_0 v_b^2 [\ln v_b (\mathcal{K}_{D, 1}^{(1)})^* [\chi_{\p D}]
   +\mathcal{K}_{D, 1}^{(2)})^*[\chi_{\p D}] \\
    0 
\end{pmatrix}, \\ 
\mathcal{A}_{0,1}[\Psi_0] &=&
\begin{pmatrix}
    0 \\
   -\alpha_0 v_b^2 (\f{1}{2}Id + \mathcal{K}_{D}^*) [a \psi_0]
\end{pmatrix} =
\begin{pmatrix}
    0 \\
   -\alpha_0 a v_b^2 \psi_0
\end{pmatrix}.
\eeas
It follows that
\beas
\left( \mathcal{A}_{1,1, 0}[\Psi_0], \Phi_0\right)&=&
\alpha_0 \beta_0( \psi_0,  v_b^2 (\mathcal{K}_{D, 1}^{(1)})^* \chi_{\p D}) = \alpha_0 \beta_0( \psi_0,  v_b^2 4 \bar{b}_{1} Vol(D)\chi_{\p D}) 
\\
&=&
4 \alpha_0 \beta_0 v_b^2 b_1 Vol(D) (\psi_0, \chi_{\p D});\\ 
\left( \mathcal{A}_{1,2, 0}[\Psi_0], \Phi_0\right)&=&
\alpha_0 \beta_0 \left( \psi_0,  v_b^2 [\ln v_b (\mathcal{K}_{D, 1}^{(1)})^* [\chi_{\p D}]
   +(\mathcal{K}_{D, 1}^{(2)})^*[\chi_{\p D}] \right) \\
&=&
4 \alpha_0 \beta_0 v_b^2 \ln v_b b_1 Vol(D) (\psi_0, \chi_{\p D})
+  \\
&&  \alpha_0 \beta_0 v_b^2 \left(\psi_0, (2 \bar{b}_{1} + 4 \bar{c}_{1}) Vol(D) \chi_{\p D}(x) + 
4 \bar{b}_{1} \int_{D} \ln |x-y| d y\right) \\
&=&
\alpha_0 \beta_0 v_b^2   Vol(D) (4 b_1 \ln v_b  + 4b_1 +4c_1) (\psi_0, \chi_{\p D}) + 4b_1\alpha_0 \beta_0 v_b^2 (\psi_0, \int_{D} \ln |x-y| d y) \\
&=&
4\alpha_0 \beta_0 v_b^2   Vol(D) (b_1 \ln v_b b_1 + b_1 + c_1) (\psi_0, \chi_{\p D}) - 4b_1\alpha_0 \beta_0 v_b^2 \gamma_0 Vol(D); \\
\left( \mathcal{A}_{0,1}[\Psi_0], \Phi_0\right)  &=&
- \alpha_0 \beta_0 a v_b^2 (\psi_0, \chi_0),
 \eeas
where we have used the fact
\beas
(\psi_0, \int_{D} \ln |x-y| d y) &=& \int_{\p D} \psi_0(x)d\sigma(x) \int_{\p D} \ln |x-y| d y
= \int_{D} dy \int_{\p D}  \ln |x-y| \psi_0(x) d\sigma(x) \\
&=& \int_{D} -\gamma_0 dy = -\gamma_0 Vol(D)  
\eeas
in the second equality above. 
Therefore, we derive that
\beas
& 4b_1 Vol(D) (\psi_0, \chi_{\p D}) \omega^2 \ln \omega +
4\bigg[Vol(D) (b_1\ln v_b + b_1 +c_1) (\psi_0, \chi_{\p D}) - b_1  \gamma_0 Vol(D) \bigg] \omega^2 \\
&
- a \delta(\psi_0, \chi_0) + O(\omega^4 \ln \omega)+ O(\delta \omega^2 \ln \omega) =0.
\eeas

This completes the proof of the lemma. 
\end{proof}

\begin{rmk}


In the special case when $D$ is the unit disk, we have Vol(D) = $\pi$ and $\gamma_0=0$. Therefore, the Minnaert resonance in two dimensions is given by the roots of the following equation:
\bea \label{eq-resonance-2d-unit-circle}
\omega^2 \ln \omega + (\ln v_b + 1 + \f{c_1}{b_1})\omega^2 
- \f{1}{4\pi}\f{a \delta}{b_1} = 0. 
\eea
\end{rmk}

\begin{rmk}
We can use the same method as in Section \ref{sec-3d-point-scatter} to derive the point scatterer approximation for the scattering by a single bubble in two dimensions. 
\end{rmk}

\end{document}